\documentclass{amsart}
\usepackage{mathtools,verbatim, amssymb, amsmath, amscd, color, enumitem}
\usepackage[utf8]{inputenc}
\usepackage{tikz}
\usepackage{tikz-cd}
\usepackage[colorlinks=true]{hyperref}

\newcommand{\B}[1]{\mathbf #1}

\newcommand{\wt}[1]{\widetilde #1}

\newtheorem{corollary}{Corollary}

\newtheorem{remark}{Remark}
\newtheorem{theorem}{Theorem}
\newtheorem{lemma}{Lemma}
\newtheorem{fact}{Fact}

\newtheorem{definition}{Definition}
\newtheorem{example}{Example}
\newtheorem*{question*}{Question}
\newtheorem*{conjectureg*}{Gromov Conjecture}
\newtheorem*{conjectured*}{Rationality Conjecture}
\begin{document}
\author[Marcinkowski]{Micha{\l} Marcinkowski}
\address{Uniwersytet Wroc{\l}awski \textbf{\textit{\&}} Instytut Matematyczny Polskiej Akademii Nauk, Warszawa}
	\email{marcinkow@math.uni.wroc.pl}
\thanks{The work on this paper was conducted during the author's internship at the Warsaw Center of Mathematics and Computer Science.
The author was supported by a scholarship of the Foundation for Polish Science and by the National Science Centre grant 2012/06/A/ST1/00259}
\title[Rationally inessential macroscopically large manifolds]
{Gromov positive scalar curvature conjecture and rationally inessential macroscopically large manifolds}
\maketitle
\begin{abstract}
We give the first examples of rationally inessential but macroscopically large manifolds.
Our manifolds are counterexamples to the Dranishnikov rationality conjecture. 
For some of them we prove that they do not admit a metric of positive scalar curvature, thus satisfy the Gromov positive scalar curvature conjecture. 
Fundamental groups of our manifolds are finite index subgroups of right angled Coxeter groups. 
The construction uses small covers of convex polyhedrons (or alternatively Davis complexes) and surgery.    
\end{abstract}

\section{Introduction} 

Let $X$ be a metric space and let $Y$ be a topological space. 
We say that a map $f \colon X \to Y$ is uniformly cobounded if there exists a real number $C$ such that $diam(f^{-1}(y)) < C$ for every $y \in Y$. 

\begin{definition}
The macroscopic dimension of $X$, denoted $dim_{mc}(X)$, is the smallest number $k$ such that there exist 
a $k$-dimensional simplicial complex $K$ and a continuous, uniformly cobounded map $f \colon X \to K$. 
\end{definition}

Let $M$ be a Riemannian manifold of topological dimension $n$, and let $\wt M$ be the universal cover of $M$ with the pullback Riemannian metric.
Note that since $\wt M$ can be given a structure of simplicial complex, $dim_{mc}(\wt M)$ is never greater than the topological dimension.

Macroscopic dimension was defined by Gromov (\cite{MR1389019}) in the search of topological obstructions 
for manifolds to admit a Riemannian metric with positive scalar curvature (briefly PSC). 
He conjectured that such manifolds tend to have deficiency of macroscopic dimension in the following sense

\begin{conjectureg*}
Let $M$ be a closed $n$-dimensional manifold. If $M$ admits a Riemannian metric of positive scalar curvature,
then $dim_{mc}(\wt M) \leq n-2$. 
\end{conjectureg*}

We always assume that the metric on $\wt M$ is a pullback of some Riemannian metric on $M$.
Macroscopic dimension of $\wt M$ does not depend on a metric chosen on $M$.

The $n-2$ in the conjecture comes from the following prototypical example:
for any $M^{n-2}$, the manifold $M' = M \times S^2$ admits a PSC metric.
We have $dim_{mc}(\wt M') = dim_{mc}(\wt M \times S^2) = dim_{mc}(\wt M) \leq n-2$.
Thus an inequality in the conjecture is sharp.

There is also a version of the Gromov Conjecture, called the weak Gromov conjecture,
which asserts that if $M$ admits a PSC metric, then $dim_{mc}(\wt M) \leq n-1$.\\

The Gromov conjecture was proven for $3$-dimensional manifolds (\cite{MR720933}) and for manifolds whose fundamental groups 
satisfy certain assumptions of analytical flavor (\cite{MR2578547,dranish.conjecture}).
In the present state of the art, the Gromov conjecture (and even its weak version) is considered to be out of reach.
It implies other longstanding conjectures, e.g.~the Gromov-Lawson conjecture, 
which asserts that no aspherical manifold admits a PSC metric.\\ 

An $n$-dimensional manifold $M$ is called \textbf{macroscopically large} if $dim_{mc}(\wt M) = n$.
Let us consider the following

\begin{example}\label{ex}
Let $M$ be a closed oriented manifold, $\pi = \pi_1(M)$, and let $B\pi$ be a classifying space endowed with a structure of a simplicial complex.
Denote by $f \colon M \to B\pi$ the map classifying the universal bundle. 
If $f_*([M])=0 \in H_n(B\pi;\B Z)$, then there is a homotopy of $f$ to some map $g \colon M \to B\pi^{[n-1]}$. 
It follows, that there exist an \textbf{equivariant homotopy} of a lift $\wt f \colon \wt M \to E\pi$ 
to $\wt g \colon \wt M \to E\pi^{[n-1]}$. Then $\wt g$ is a cobounded map, thus $M$ cannot be macroscopically large. 
\end{example}

One can ask if the property that a manifold $M$ is large or not can be expressed in homological terms.
To do that, let us introduce the following notions (we keep the notation from Example \ref{ex}). 
We call $M$ \textbf{inessential} if $f_*([M]) = 0 \in H_n(B\pi;\B Z)$ and \textbf{rationally inessential} if $f_*([M]) = 0 \in H_n(B\pi;\B Q)$.
Note that $M$ is rationally inessential if and only if $f_*([M]) \in H_n(B\pi;\B Z)$ is torsion.
An example of rationally inessential but essential orientable manifold is $\B R \B P^3$. 
Obviously $dim_{mc}(\widetilde{\B R \B P}{}^3) = 0$, thus being essential is not enough to be macroscopically large. 
Gromov expected, that if $M$ is rationally essential, then $M$ is macroscopically large.
A. Dranishnikov in \cite{dranish.nowa} disproved this conjecture and found the right homology theory where one should place the fundamental class $[M]$
to test if $M$ is large just by checking if the class is non-trivial. 
Moreover, it is showed that $[M]$ is large if and only if there exist a \textbf{bounded homotopy} from $\wt f \colon \wt M \to E\pi$
to some map which ranges in $E\pi^{[n-1]}$ 
(these results are described in more details in \ref{hom.car}).
In \cite{dranish.conjecture} it is conjectured that

\begin{conjectured*}
If $M$ is rationally inessential, then it is macroscopically small. 
\end{conjectured*}

It would imply the weak Gromov conjecture for rationally inessential manifolds.
In the paper we give counterexamples to this conjecture.
In terms of homotopy theory, they are rationally inessential manifolds such that $\wt f \colon \wt M \to E\pi$
can not be deformed by means of bounded homotopy to a map which ranges in $E\pi^{[n-1]}$.   
In the case when our manifolds are spin, we prove that they do not admit a PSC metrics.
Thus they satisfy the Gromov Conjecture. If they are not spin, the conjecture is open.\\ 

\textbf{Outline of the construction}. Let $K$ be an $n$-dimensional simple convex polyhedron, e.g.~$n$-dimensional cube
\footnote{By $n$-dimensional polyhedron we mean an intersection of finite number of half-spaces in the $n$-dimensional Euclidean space.
A polyhedron $K$ is simple if a neighborhood of every vertex of $K$ looks like a neighborhood of a vertex in the $n$-dimensional symplex.}.
Assume that each maximal face of $K$ is colored by one of $n$ colors such that every pair of different non-disjoint faces have different colors \footnote{Due to Lemma \ref{passing_to_sub} this assumption is not restrictive at all.}.
To construct a manifold $N$ out of $K$ we use 'the reflection trick'.
That is, we glue up $2^n$ copies of $K$ along maximal faces. 
The way of how we glue them depends of the coloring of faces.
To obtain a counterexample $M$ to the Dranishnikov conjecture we attaching a bunch of handles and 'fill up' some loops in the connected sum $N \# N$. 
This is made such that $M$ is rationally inessential and $\pi(M)$ is a finite index, torsion free subgroup of a Coxeter group.
These properties are crucial in proving that $M$ is macroscopically large and, if $M$ is spin, does not admit a PSC metric.  

\section{Preliminaries}

\subsection{Homological characterisation of macroscopically large manifolds}\label{hom.car}

A. Dranishnikov gave (\cite{dranish.nowa}) a homological criterion one can use to detect if $M$ is macroscopically large.
We briefly discuss his result in the form we are going to use it. 

Let $X$ be a locally finite simplicial complex. Let $C^{lf}_*(X;\B Z)$ be the module of $\B Z$-valued simplicial chains on $X$.
Here chains need not to be finitely supported nor bounded. 
The chain complex $(C^{lf}_*(X; \B Z),\partial)$ with the standard differencial 
defines the locally finite homology groups $H^{lf}_*(X,\B Z)$ (see \cite[Ch.11]{MR2365352} for an exhaustive treatment of the locally finite homology). 
If $A < X$ is a subcomplex of $X$, the notion of relative locally finite homology is defined as usual by the quotient chain complex $C^{lf}_*(X; \B Z) / C^{lf}_*(A; \B Z)$. 
In \cite{dranish.nowa} a more general definition (with an arbitrary coefficients) of coarsely equivariant homology is given. 
For $\B Z$ coefficients, the coarsely equivariant homology is naturally isomorphic to the locally finite homology.

Let $\wt X$ be the universal cover of $X$ with the induced simplicial structure and let $\pi = \pi_1(X)$.
Recall that $H_*(X;\B Z) = H_*^{\pi}(\wt X;\B Z)$, where the last group is defined by means of $\pi$-equivariant chains $C_*^{\pi}(\wt X;\B Z)$.
The inclusion $i \colon C_*^{\pi}(\wt X;\B Z) \to C^{lf}_*(\wt X;\B Z)$ induces the so called equivariant coarsening map 
$ec_* \colon H_*(X;\B Z) \to H^{lf}_*(\wt X;\B Z)$.

\begin{theorem}{\cite[Th.2.2 and Th.4.5]{dranish.nowa}}\label{dranish}
Let $M$ be an $n$-dimensional, oriented, closed manifold and let $\Gamma = \pi_1(M)$. 
Suppose that $B\Gamma$ is realised by a finite simplicial complex.
Let $f \colon M \to B\Gamma$ be a map classifying the universal cover.
\begin{enumerate}
\item 
 Then $M$ is macroscopically large if and only if $ec_nf([M]) \neq 0 \in H^{lf}_n(E\Gamma,\B Z)$, where $E\Gamma = \widetilde{B\Gamma}$.
\item  
We call a homotopy $H \colon \wt M \times I \to E\Gamma$ bounded, if for some number $C$ and every $x \in \wt M$ we have $diam(H[{x} \times I]) < C$.
On $E\Gamma$ we consider any proper geodesic metric.
Then $M$ is macroscopically large if and only if there is no bounded homotopy from $f$ to a map
$g \colon \wt M \to E\Gamma^{[n-1]}$.
\end{enumerate}
\end{theorem}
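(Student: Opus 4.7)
The plan is to establish the chain of equivalences
\[
M \text{ macroscopically large} \ \Longleftrightarrow \ (2) \ \Longleftrightarrow \ (1),
\]
so that (1) and (2) both characterise macroscopic largeness. A useful preliminary observation is that since $M$ is compact, the lift $\wt f \colon \wt M \to E\Gamma$ is itself uniformly cobounded: if $D$ is a compact fundamental domain for the $\Gamma$-action on $\wt M$, then $\wt f(D)$ is compact, the set $F = \{\gamma \in \Gamma : \gamma\wt f(D) \cap \wt f(D) \neq \emptyset\}$ is finite by properness of the $\Gamma$-action, and a short covering argument shows that every fiber $\wt f^{-1}(y)$ has diameter at most $\mathrm{diam}\bigl(\bigcup_{\delta \in F} \delta D\bigr)$.

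I would first prove (2) $\iff$ macroscopic largeness. Easy direction: a bounded homotopy $H$ from $\wt f$ to $g \colon \wt M \to E\Gamma^{[n-1]}$ keeps $g$ within bounded distance of the uniformly cobounded $\wt f$, so $g$ itself is uniformly cobounded into an $(n-1)$-dimensional complex, giving $\dim_{mc}(\wt M) \leq n-1$. Converse: given a uniformly cobounded $\phi \colon \wt M \to K$ with $\dim K \leq n-1$, I would construct a proper simplicial map $\psi \colon K \to E\Gamma^{[n-1]}$ with $\psi \circ \phi$ uniformly close to $\wt f$, by defining $\psi$ on each vertex $v$ to be a vertex of $E\Gamma$ near $\wt f(\phi^{-1}(v))$ and extending skeleton by skeleton, using contractibility of $E\Gamma$ together with the coboundedness constant of $\phi$ to control the size of each simplex's image. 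A convex-combination style homotopy in the contractible space $E\Gamma$, starting from $\wt f$, then lands at $\psi \circ \phi \in E\Gamma^{[n-1]}$ with uniform bounds.

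Next I would prove (1) $\iff$ (2) by locally finite obstruction theory. The equivariant fundamental cycle of $\wt M$ pushes forward under $\wt f$ to a locally finite $n$-cycle on $E\Gamma$ representing $ec_n f_*[M]$. A bounded homotopy of $\wt f$ into $E\Gamma^{[n-1]}$, after simplicial approximation (which preserves local finiteness because the width is uniformly bounded), produces a locally finite $(n{+}1)$-chain whose boundary is that cycle, so $ec_n f_*[M] = 0$. For the converse, a locally finite filling $c \in C^{lf}_{n+1}(E\Gamma)$ of $\wt f_*[\wt M]$ is realised geometrically: the boundary identification cancels top-dimensional simplices of $\wt M$ pairwise and pushes $\wt f$ into $E\Gamma^{[n-1]}$, with local finiteness of $c$ bounding the width of the resulting homotopy uniformly.

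The main obstacle lies in both converse directions, where one must promote coarse or purely algebraic data into a bona fide bounded homotopy. This requires a skeletal induction in which the metric displacement is controlled globally rather than just locally, and leans crucially on the hypothesis that $B\Gamma$ is a finite complex, so that $E\Gamma$ has bounded geometry and the $\Gamma$-action is cocompact; without such local-to-global control, bounded homotopy extension can fail even when the algebraic obstruction vanishes.
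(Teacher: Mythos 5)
First, a point of comparison: the paper does not prove Theorem \ref{dranish} at all; it is quoted with a citation to Dranishnikov (\cite{dranish.nowa}, Th.\ 2.2 and 4.5), so the only proof your attempt can be measured against is Dranishnikov's original coarse obstruction-theoretic argument. Your outline does get the architecture right (both (1) and (2) characterise largeness), and the two easy implications are essentially sound: since $M$ is closed and $B\Gamma$ finite, $\widetilde f$ is uniformly cobounded \emph{and} uniformly proper (you only prove the former, but the latter is what the easy direction of (2) actually uses), so a bounded homotopy into $E\Gamma^{[n-1]}$ yields a uniformly cobounded map to an $(n-1)$-complex; and a proper bounded homotopy into the $(n-1)$-skeleton, after controlled simplicial approximation, kills $ec_nf_*([M])$ because locally finite $n$-chains on an $(n-1)$-dimensional complex vanish.

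The genuine gap is that both converse directions -- which you yourself identify as ``the main obstacle'' -- are exactly the content of the theorem, and your proposal does not close them. (i) Given a uniformly cobounded $\phi \colon \widetilde M \to K$, the complex $K$ carries no metric and $\phi$ no Lipschitz control, so ``a vertex of $E\Gamma$ near $\widetilde f(\phi^{-1}(v))$'' and ``extend skeleton by skeleton using contractibility'' are not yet meaningful with bounds: one has to replace $K$ by the nerve of the uniformly bounded cover $\{\phi^{-1}(\mathrm{st}(v))\}$ and then use \emph{uniform} contractibility of $E\Gamma$ (contractibility plus cocompactness), building the homotopy cell by cell with displacement estimates; there is no ``convex-combination style homotopy'' in $E\Gamma$. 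This is where finiteness of $B\Gamma$ actually does work, and it must be carried out, not invoked. (ii) More seriously, the step ``a locally finite filling $c$ of $\widetilde f_*[\widetilde M]$ is realised geometrically \dots local finiteness of $c$ bound[s] the width of the resulting homotopy uniformly'' is unjustified and, as an inference, false: a locally finite $(n+1)$-chain with $\partial c = \widetilde f_\#[\widetilde M]$ carries no a priori bound on how far it spreads from the cycle, and an algebraic filling does not by itself produce any homotopy of $\widetilde f$, let alone a bounded one. Converting vanishing in $H^{lf}_n(E\Gamma)$ (equivalently, in the coarsely equivariant theory used in \cite{dranish.nowa}) into a bounded homotopy into $E\Gamma^{[n-1]}$ is precisely Dranishnikov's argument via coarsely/almost equivariant chains on a uniformly contractible, bounded-geometry $E\Gamma$, and your proposal offers no substitute for it. As written, the proposal restates the two hard implications rather than proving them.
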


\begin{remark}\normalfont
There is another notion of macroscopically large manifolds given by Gong and Yu. 
It is expressed in terms of non-vanishing of the fundamental class in the coarse homology group $HX_*(\wt M,\B Q)$ 
(\cite[Def.8.2.2]{MR2986138}). As it is shown in \cite[Th. 4.2]{dranish.conjecture}, this definition is equivalent to ours provided that the coefficient module is taken to be $\B Z$. 
\end{remark}

\subsection{Small covers}

The idea of a small cover of a simplicial complex is the main ingredient of the construction.
They were investigated in the seminal paper of M.~Davis and T.~Januszkiewicz (\cite{DJ}).
Here we discuss the notion of a small cover and collect some facts we use later. 

\subsubsection{Basic definitions}\label{basicdef}
Let $L$ be an $n$-dimensional simplicial complex. 
By $L^b$ we denote its barycentric subdivision. 
By definition, it is the geometric realisation of the poset of nonempty simplices of $L$.
It means that every $(l-1)$-dimensional simplex $\tau \in L^b$ is given by a chain $\tau = (\sigma_1 < \ldots < \sigma_l)$, $\sigma_i \in L$.  
Let $\sigma < L$ be a simplex. We define the \textbf{face} $F_{\sigma}$ to be the geometric realisation of the poset
$\{\sigma' \in L~|~\sigma \leq \sigma' < L\}$. 

Thus $F_{\sigma}$ is a subcomplex of $L^b$. 
If $\sigma$ is $k$-dimensional, then $F_\sigma$ is $n-k$ dimensional.
If $\sigma = [v_0,\ldots,v_k]$, then $F_{\sigma} = F_{v_0} \cap \ldots \cap F_{v_k}$. 
The set of faces introduces on $L$ a so called mirror structure. 
We use the notation $L^*$ if we refer to $L$ with this mirror structure. 
Note that if $L$ is not a manifold, then a face need not be homeomorphic to the disc.

Let $\B G$ be a $\B Z_2$-linear space and let $\lambda \colon L^{[0]} \to \B G$.  
The function $\lambda$ is defined on the set of vertices of $L$; equivalently, it is defined on the $n$-dimensional faces of $L^*$. 
Let $F$ be an $(n-k)$-dimensional face, then $F = F_{v_0} \cap \ldots \cap F_{v_k}$ (i.e. $F$ is dual to $[v_0, \ldots, v_k]$, 
for the unique set of vertices $v_i$.
We define $\B G_F = Span_{\B G} \langle \lambda(F_{v_0}),\ldots,\lambda(F_{v_k}) \rangle$. 
We call $\lambda$ a \textbf{characteristic function} if for each $(n-k)$-dimensional face,
linear dimension of $\B G_F$ equals $k+1$. 

Now we define a 'cover space' associated to $L^*$ and a characteristic function $\lambda$. 
Let $p \in L^*$. By $F(p)$ denote the minimal face which contains $p$. 
Let $C(L^*)$ be the cone over $L^*$. Note that $C(L^*)$ contains $L^*$ as the base of the cone.
Consider the space $M_L = C(L^*) \times \B G / \sim_{\lambda}$, 
where $p \times g \sim_{\lambda} p' \times g'$ if and only if $p = p' \in L^*$ and $g'-g \in \B G_{F(p)}$. 

In this construction the copies of $C(L^*)$ are glued along some $n$-dimensional faces, according to function $\lambda$.
Note that on $M_L$ we have a natural $\B G$-action, which induces a quotient map $p_L \colon M_L \to C(L^*)$.
If the linear dimension of $\B G$ is equal to $n+1$, we call $M_L$ a \textbf{small cover} of $C(L^*)$. 
For examples we advise to consult \cite[Example 1.19]{DJ}. 

Now we introduce a simplicial structure on $M_L$.
First we put on $C(L^*)$ a simplicial structure given by $L^b$. 
Since each face is a subcomplex of $L^b$, the gluings between copies of $C(L^*)$ are made along subcomplexes of this triangulations. 
Thus the simplicial structures on $C(L^*)$'s carry to $M_L$. 

In the sequel, we will need a particular case of this construction.
Let $\lambda \colon L^{[0]} \to \B G$ be a characteristic function and let $dim(\B G) = n+1$.
Let $e_i$, $i = 0 \ldots n$,  be a basis of $\B G$. Later we simply write $\B G = \B Z_2^{n+1}$. 
We call $\lambda$ \textbf{folding on a simplex} if $\lambda(v) = e_i$ for some $i$ and every $v \in L^{[0]}$. 
The name comes from the fact that such a $\lambda$ defines a map $f_{\lambda} \colon L \to \Delta^n$, where  $\Delta$ is the $n$ dimensional simplex.
Indeed, if we think of $\Delta$ as a simplex spanned by the standard vectors $e_i$ in $\B R^{n+1}$, then for every $v \in L^{[0]}$
we define $f_{\lambda}(v) = \lambda(v)$ and extend $f$ linearly to the whole of $L$. 

Note that in this case being characteristic means that $\lambda(v) \neq \lambda(w)$ if $v$ and $w$ are incident.

We end this section with the following lemma.

\begin{lemma}\label{passing_to_sub}
Let $L$ be an $n$-dimensional complex. 
There exists a folding on a simplex characteristic function for $L^b$.
Thus, having an arbitrary complex, we can always construct a folding on a simplex
characteristic function after passing to the barycentric subdivision.
\end{lemma}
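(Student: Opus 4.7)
The plan is to exhibit $\lambda$ directly, using the dimension grading already present in any barycentric subdivision. Recall that the vertices of $L^b$ are in canonical bijection with the nonempty simplices of $L$: a vertex $v_\sigma$ of $L^b$ is the barycenter of a simplex $\sigma \in L$, and a $k$-simplex of $L^b$ is a chain $v_{\sigma_0},\ldots,v_{\sigma_k}$ arising from a strictly increasing flag $\sigma_0 < \sigma_1 < \ldots < \sigma_k$ in the face poset of $L$. Since $\dim L = n$, the integer $\dim \sigma$ always lies in $\{0,1,\ldots,n\}$, which matches the index set of the standard basis $\{e_0,\ldots,e_n\}$ of $\B Z_2^{n+1}$.

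I would then set $\lambda(v_\sigma) \defeq e_{\dim \sigma}$. Every value of $\lambda$ is a basis vector, so by construction $\lambda$ is folding on a simplex. To verify that it is characteristic, pick a $k$-simplex $\tau = (\sigma_0 < \ldots < \sigma_k)$ of $L^b$. The strict chain inequalities force $\dim \sigma_0 < \dim \sigma_1 < \ldots < \dim \sigma_k$, so the $k+1$ labels $e_{\dim \sigma_i}$ are pairwise distinct standard basis vectors. Consequently the subspace $Span \langle e_{\dim \sigma_0}, \ldots, e_{\dim \sigma_k} \rangle$ attached to the face $F$ of $(L^b)^*$ dual to $\tau$ has linear dimension exactly $k+1$, which is precisely the characteristic condition.

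There is essentially no difficulty to overcome here: the characteristic condition for folding on a simplex reduces to asking that the vertices of every simplex of $L^b$ receive pairwise distinct labels among $\{e_0,\ldots,e_n\}$, and the tautological grading of $L^b$ by simplex-dimension supplies such a coloring automatically. The second sentence of the lemma is then immediate by applying the above construction to $L^b$ for an arbitrary $n$-dimensional input complex $L$.
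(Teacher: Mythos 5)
Your construction is exactly the paper's: label each barycenter $v_\sigma$ of $L^b$ by $e_{\dim\sigma}$ and use the strict increase of dimensions along a flag to see that incident vertices (indeed all vertices of any simplex of $L^b$) get distinct basis vectors, which is the characteristic condition for a folding on a simplex. Correct, and the same argument as in the paper.
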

\begin{proof}
Every vertex $v \in L^b$ is a chain of length $1$. 
Assume that $v = (\sigma)$ and $\sigma$ is $i$-dimensional. 
We can put $\lambda(v) = e_i$. 
Indeed, if $v_1 = (\sigma_1)$ and $v_2 = (\sigma_2)$ are connected by an edge $e$, 
then $e = (\sigma_1<\sigma_2)$ or $e = (\sigma_2<\sigma_1)$. 
Thus $\sigma_1$ and $\sigma_2$ have different dimensions. 
\end{proof}

\subsubsection{Properties}\label{properties}

Let $L$ be a simplicial complex. The right angled Coxeter group $W_L$ associated to $L$ is given by the presentation 
$$
W_L = \langle L^{[0]}|\thinspace v^2,[v,w]\text{ for }v,w\in L^{[0]}\text{ and }(v,w)\in L^{[1]}\rangle.
$$

For any Coxeter group $W_L$ there exist a simplicial complex $\Sigma_{W_L}$, called the Davis complex of $W_L$, 
with a proper, cocompact action of $W_L$. 
The fundamental domain of the $W_L$-action on $\Sigma_{W_L}$ is simplicialy isomorphic to $C(L)$. 
For infinite $W_L$, the complex $\Sigma_W$ is contractible (see \cite[ch.7]{Davis}). 

If $\lambda$ is any function from $L^{[0]}$ to $\B G$,
then $\Lambda$ uniquely extends to a homomorphism from $W_L$ to $\B G$. 

\begin{fact}\label{factsmall}
Let $M_L$ be a cover associated to some characteristic function $\lambda$. Then
\begin{enumerate} 
	\item $\pi_1(M_L) = ker(\Lambda)$ is a torsion free finite index subgroup of $W_L$. 
	\item $M_L$ is aspherical and its universal cover is homeomorphic to $\Sigma_{W_L}$.
	\item If $M_S$ is a small cover associated to a sphere $S$, then it is an oriented manifold. 
\end{enumerate}
\end{fact}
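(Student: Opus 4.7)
The plan is to reduce everything to an explicit covering map $\phi : \Sigma_{W_L} \to M_L$ induced by the extension $\Lambda : W_L \to \mathbb{G}$ of the characteristic function. The Davis complex admits the description $\Sigma_{W_L} = C(L^*) \times W_L / \approx$, where $(p,w) \approx (p,w')$ iff $w^{-1}w' \in W_{F(p)}$, the parabolic subgroup generated by the vertices of the minimal face of $L^*$ containing $p$. Our construction of $M_L$ in \S\ref{basicdef} has an identical form with $W_L$ replaced by $\mathbb{G}$ and $W_{F(p)}$ replaced by $\mathbb{G}_{F(p)}$. Since $\Lambda$ sends each $W_{F(p)}$ onto $\mathbb{G}_{F(p)}$, the formula $\phi([p,w]) := [p, \Lambda(w)]$ is well-defined, and a direct local check identifies $\phi$ with a regular covering whose deck group is $\ker \Lambda$.

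Parts (1) and (2) follow quickly. For (1), the target $\mathbb{G}$ is a finite $\mathbb{Z}_2$-vector space, so $\ker \Lambda$ has finite index in $W_L$. Torsion-freeness uses the standard fact that every finite-order element of a Coxeter group is conjugate into a spherical parabolic subgroup $W_\sigma \cong (\mathbb{Z}_2)^{|\sigma|}$ generated by the vertices of some simplex $\sigma \in L$; the defining property of the characteristic function says precisely that the $\lambda(v)$ for $v \in \sigma$ are linearly independent, so $\Lambda|_{W_\sigma}$ is injective and $\ker \Lambda$ meets no conjugate of such a $W_\sigma$ nontrivially. For (2), contractibility of $\Sigma_{W_L}$ (\cite[Ch.~7]{Davis}) combined with the covering $\phi$ identifies $\Sigma_{W_L}$ as the universal cover of $M_L$, so $M_L$ is aspherical.

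For (3), when $L = S$ is a triangulated $n$-sphere, $C(S^*)$ is a topological $(n+1)$-ball whose boundary $S^b$ is decomposed into the mirror panels $\{F_v\}$. Near a point in the interior of a codimension $k+1$ face $F_\sigma$, the characteristic condition $\dim \mathbb{G}_{F_\sigma} = k+1$ glues exactly $2^{k+1}$ copies of $C(S^*)$ around the face, and these assemble into a Euclidean $(n+1)$-ball chart; thus $M_S$ is a closed $(n+1)$-manifold. For orientability, each vertex generator of $W_L$ acts on $\Sigma_{W_L}$ by reflection across a codimension-one wall, reversing orientation, so the orientation character of the $W_L$-action is the homomorphism $\eta : W_L \to \mathbb{Z}_2$ sending every generator to $1$; orientability of $M_S = \Sigma_{W_L}/\ker\Lambda$ amounts to $\ker \Lambda \subseteq \ker \eta$, i.e.\ to the existence of $\bar \eta : \mathbb{G} \to \mathbb{Z}_2$ with $\bar \eta \circ \lambda \equiv 1$ on vertices. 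For the folding-on-a-simplex characteristic functions ensured by Lemma \ref{passing_to_sub}, the linear functional $\bar \eta$ summing all coordinates satisfies $\bar \eta(e_i) = 1$ and provides the required factorization. The main obstacle will be the local manifold computation in (3), where one must carefully identify the link of a point over $F_\sigma$ with a sphere; all the remaining steps reduce to standard Davis--Januszkiewicz material.
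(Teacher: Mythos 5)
Your outline is correct, but it is organized quite differently from the paper's proof, which disposes of almost everything by citation: item (1) is \cite[Col.~4.5]{DJ} together with a short torsion-freeness argument, item (2) is \cite[Lemma 4.4]{DJ}, and item (3) is \cite[Prop.~1.7]{DJ}. What you do instead is reconstruct the regular covering $\Sigma_{W_L}\to M_L$ with deck group $\ker\Lambda$ directly from the basic-construction description of the Davis complex, and deduce (1) and (2) from it together with contractibility of $\Sigma_{W_L}$ \cite[ch.7]{Davis}; this is a self-contained rederivation of the quoted Davis--Januszkiewicz results, and it buys transparency at the cost of redoing standard material. Your torsion-freeness argument is essentially identical to the paper's: the paper also conjugates a finite-order element into a spherical parabolic $W_\sigma$ (via \cite[Lemma 1.3]{D}) and uses the characteristic condition, phrased there as ``each generator has to appear an even number of times'', which is exactly your statement that $\Lambda$ is injective on $W_\sigma$.

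For item (3) there is a substantive caveat. Your orientability argument requires a functional $\bar\eta\colon \B G\to \B Z_2$ with $\bar\eta\lambda\equiv 1$ on vertices, and you produce it only for folding-on-a-simplex characteristic functions (via Lemma \ref{passing_to_sub}). The Fact as written allows an arbitrary characteristic function on a sphere, and then such $\bar\eta$ need not exist and orientability can genuinely fail: for $S=\partial\Delta^2$ with $\lambda(v_1)=e_1$, $\lambda(v_2)=e_2$, $\lambda(v_3)=e_1+e_2$ the resulting small cover is $\B R\B P^2$, and the analogous $\lambda$ on $\partial\Delta^{2m}$ gives $\B R\B P^{2m}$; these satisfy the paper's definition of a small cover of a sphere but are not orientable. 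So your argument cannot be upgraded to the literal statement---the existence of $\bar\eta$ (equivalently $\ker\Lambda\subset W_S^+$) is precisely the standard orientability criterion for small covers and is a real hypothesis. This is harmless for the paper, since the only small covers of spheres actually used (Step 2 of the construction, and Lemma \ref{smallisspin}) come from foldings on a simplex, which is exactly the case you treat; but if you present your proof you should state (3) with that hypothesis. Finally, your local chart computation (the step you flag as the remaining obstacle) implicitly needs links of simplices in the triangulated sphere to be spheres, e.g.\ a PL triangulation; again this is satisfied in the paper's applications, but it is part of what the citation to \cite[Prop.~1.7]{DJ} is silently covering.
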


\begin{proof}

\begin{enumerate}[leftmargin=*]
\item The fact that $\pi_1(M_L) = ker(\Lambda)$ is proved in \cite[Col. 4.5]{DJ}. 
To prove that $\pi_1(M_L)$ is torsion free, assume that $g \in \pi_1(M_L)$ is a nontrivial element of finite order. 
For $T \subset L^{[0]}$ define $W_T$ to be the subgroup of $W_L$ generated by $T$. 
In \cite[Lemma 1.3]{D} it is showed that every finite subgroup of $W_L$ is conjugate to a subgroup of finite $W_T$, for some $T$.  
Since $W_T$ is finite, it follows that $T$ spans a simplex in $L$. 
Indeed, otherwise $W_T$ would contain infinite dihedral group generated by non adjacent vertices. 
The element $g$ generates a finite subgroup of $W_L$ and, thus  
there exist a simplex $\sigma \in L$ such that $g$, up to conjugation, can be written in generators corresponding to
the vertices of $\sigma$. Since $\Lambda(g) = 0$, each of the generators has to appear even number of times. 
All the generators we used to express $g$ pairwise commutes. Thus $g = e$, which gives a contradiction. 
\item This is  \cite[Lemma 4.4]{DJ}.
\item This is \cite[Prop. 1.7]{DJ}.
\end{enumerate}  

\end{proof}

\begin{lemma}\label{G}
Let $h \colon L_1 \to L_2$ be a simplicial map such that $k$-simplices are mapped to $k$-simplices for all $k$. 
Let $\B G_1 < \B G_2$ be a linear inclusion of $\B Z_2$-linear spaces and 
let $\lambda_{L_i} \colon L_i^{[0]} \to \B G_i$ be characteristic functions such that $\lambda_{L_1} = \lambda_{L_2}h$.
Then $G_{F(p)} = G_{F(h(p))}$ for every $p \in L_1$.
\end{lemma}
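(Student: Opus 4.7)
The plan is to unwind the definition of $F(p)$ in terms of the barycentric subdivision, transport the data through $h$, and then invoke the identity $\lambda_{L_1}=\lambda_{L_2}\circ h$.

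First, I would fix $p\in L_1^*$ and locate the unique open simplex of $L_1^b$ containing it; this open simplex is encoded by some chain $\tau=(\sigma_1<\cdots<\sigma_l)$ of simplices of $L_1$. Because $F_\sigma$ is the realisation of the poset $\{\sigma'\ge\sigma\}$, a point in the open simplex of $\tau$ lies in $F_\sigma$ exactly when $\sigma\le\sigma_1$, so the minimal such face is $F(p)=F_{\sigma_1}$. Writing $\sigma_1=[v_0,\dots,v_k]$, this gives
\[
\B G_{F(p)}=\mathrm{Span}_{\B G_1}\bigl\langle\lambda_{L_1}(v_0),\dots,\lambda_{L_1}(v_k)\bigr\rangle.
\]

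Next, I would use the hypothesis that $h$ sends $k$-simplices to $k$-simplices for every $k$, i.e.\ is non-degenerate on each simplex. This makes $h(\sigma_1)=[h(v_0),\dots,h(v_k)]$ a genuine $k$-simplex of $L_2$, and the induced map on barycentric subdivisions sends the open simplex of $\tau$ onto the open simplex of the chain $(h(\sigma_1)<\cdots<h(\sigma_l))$ in $L_2^b$. Repeating the analysis of the previous step then yields $F(h(p))=F_{h(\sigma_1)}$ and
\[
\B G_{F(h(p))}=\mathrm{Span}_{\B G_2}\bigl\langle\lambda_{L_2}(h(v_0)),\dots,\lambda_{L_2}(h(v_k))\bigr\rangle.
\]

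Since $\lambda_{L_2}(h(v_i))=\lambda_{L_1}(v_i)$, the two spans coincide inside $\B G_2$ under the inclusion $\B G_1\hookrightarrow\B G_2$, which is the claim. The only place I expect to have to be careful is the dimension-preservation step: it is precisely the assumption that $h$ does not collapse any simplex which guarantees both that the chain $\tau$ has an image of the same length in $L_2^b$ and that the vertices $h(v_0),\dots,h(v_k)$ are pairwise distinct, so that $F(h(p))$ inherits the same combinatorial description as $F(p)$; everything else is bookkeeping directly from the definitions in Section~\ref{basicdef}.
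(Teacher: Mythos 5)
Your argument is correct and follows essentially the same route as the paper's proof: identify the minimal simplex $\tau_p=(\sigma_1<\cdots<\sigma_l)$ of $L_1^b$ containing $p$, deduce $F(p)=F_{\sigma_1}$, note that non-degeneracy of $h$ gives $\tau_{h(p)}=h(\tau_p)$ and $F(h(p))=F_{h(\sigma_1)}$, and conclude from $\lambda_{L_1}=\lambda_{L_2}h$ that the two spans agree. Your explicit justification of why $h$ carries the open simplex of $\tau_p$ to that of $(h(\sigma_1)<\cdots<h(\sigma_l))$ is a point the paper only asserts, so no changes are needed.
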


\begin{proof}
Let $\tau_p$ be the minimal simplex in $L_1^b$ containing $p$ and let $\tau_{h(p)}$ be the minimal simplex in $L_2^b$ containing $h(p)$.
Notice that $h(\tau_p) = \tau_{h(p)}$. 
Let $\tau_p = (\sigma_1 < \ldots < \sigma_l)$ and $\sigma_1 = [v_1, \ldots, v_s]$.
Since $\tau_p$ is minimal, it is contained in each face $F_\sigma$ which contains $p$. 
Thus $\sigma < \sigma_1$ and $F_{\sigma_1} \subset F_\sigma$.
It means that $F_{\sigma_1}$ is the minimal face containing $p$ 
and $G_{F(p)} = Span_{G_1}\langle \lambda_{L_1}(v_1) , \ldots , \lambda_{L_1}(v_s) \rangle$. 
The same is true in $L_2$; i.e.: $\tau_{h(p)} = h(\tau_p) = (h(\sigma_1) < \ldots < h(\sigma_l))$, where $h(\sigma_1) = [h(v_1), \ldots, h(v_s)]$ 
and $G_{F(h(p))} = Span_{G_2} \langle \lambda_{L_2}h(v_1), \ldots, \lambda_{L_2}h(v_s) \rangle$.
Since $\lambda_{L_1} = \lambda_{L_2}h$, we have that $G_{F(h(p))} = G_{F(p)}$.  
\end{proof}

Let us introduce the following notation.
Recall that  $C(X) = X \times I /_{\sim_X}$, 
where $I$ is the closed interval and $(x,t) \sim_X (y,s)$ if and only if $t=s=1$.
Given a map $h \colon A \to B$, we define $C_h \colon C(A) \to C(B)$ by the formula
$C_h(a \times t /_{\sim_A}) = h(a) \times t /_{\sim_B}$. 

\begin{corollary}\label{inclusion}
We use the notation from Lemma \ref{G}. Let $w \in \B G_2$ and let $C_h \colon C(L_1) \to C(L_2)$ be the map induced from $h$ to the cones. 
The function $M_{h,w} \colon M_{L_1} \to M_{L_2}$ given by $M_{h,w}(p \times v/_{\sim_{\lambda_1}}) = C_h(p) \times (v+w)/_{\sim_{\lambda_2}}$ is well defined.
If $h$ is injective, then $M_{h,w}$ is injective. 
\end{corollary}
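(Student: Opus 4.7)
The plan is to verify two separate claims: that the formula $M_{h,w}(p\times v/_{\sim_{\lambda_1}}) = C_h(p)\times(v+w)/_{\sim_{\lambda_2}}$ descends to a well-defined map between quotients, and that injectivity of $h$ forces injectivity of the induced map $M_{h,w}$. Both reduce, after an easy case analysis, to comparing the subspaces $\B G_{F(p)}\subset\B G_1$ and $\B G_{F(C_h(p))}\subset\B G_2$, which is exactly the content of Lemma \ref{G}.

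For well-definedness, I would first observe that the equivalence relation $\sim_{\lambda_1}$ identifies two pairs $p\times v$ and $p'\times v'$ nontrivially only when $p=p'$ lies in the base $L_1^*\subset C(L_1^*)$, in which case $v-v'\in\B G_{F(p)}$. So it is enough to check that if $p=p'\in L_1^*$ and $v-v'\in\B G_{F(p)}$, then $C_h(p)=C_h(p')\in L_2^*$ and $(v+w)-(v'+w)=v-v'$ lies in $\B G_{F(C_h(p))}\subset\B G_2$. The first assertion is immediate from the construction of $C_h$, and the second follows because Lemma \ref{G} (applied at $p\in L_1$, noting that $C_h|_{L_1^*}=h$) identifies the two subspaces along the inclusion $\B G_1<\B G_2$.

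For injectivity, suppose $M_{h,w}([p\times v])=M_{h,w}([p'\times v'])$. First I would verify that $h$ injective implies $C_h$ injective: this is a direct check from $C_h(a\times t)=h(a)\times t$, using that the cone point is the only point with $t=1$. Now either the representatives agree already in $C(L_2^*)\times\B G_2$, in which case $C_h(p)=C_h(p')$ gives $p=p'$ by injectivity of $C_h$ and $v+w=v'+w$ gives $v=v'$; or the $\sim_{\lambda_2}$-relation applies, forcing $C_h(p)=C_h(p')\in L_2^*$, whence $p=p'\in L_1^*$ by injectivity of $h$, and $v-v'\in\B G_{F(C_h(p))}$. Applying Lemma \ref{G} a second time, $\B G_{F(C_h(p))}$ equals $\B G_{F(p)}$ as a subspace of $\B G_2$, and since this subspace is contained in $\B G_1$, the relation $v-v'\in\B G_{F(p)}$ already holds inside $\B G_1$. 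Hence $[p\times v]=[p'\times v']$ in $M_{L_1}$.

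The only mild obstacle is the bookkeeping across the inclusion $\B G_1<\B G_2$: one needs to know that $\B G_{F(p)}$, computed a priori inside $\B G_1$, coincides with $\B G_{F(C_h(p))}$ computed inside $\B G_2$, both as abstract subspaces, so that the condition $v-v'\in\B G_{F(p)}$ can be transferred back and forth. This is precisely what Lemma \ref{G} furnishes under the hypothesis $\lambda_{L_1}=\lambda_{L_2}h$, and once it is invoked the rest of the argument is purely formal.
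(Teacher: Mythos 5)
Your proof is correct and follows essentially the same route as the paper: both reduce well-definedness and injectivity to the identification $\B G_{F(p)}=\B G_{F(h(p))}$ furnished by Lemma \ref{G}, with the same case analysis on whether the points lie in the base of the cone. The only cosmetic differences are that the paper first reduces to $w=0$ by composing with the $\B G_2$-action (you instead note that the translation by $w$ cancels in differences) and argues injectivity contrapositively rather than from equal images.
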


\begin{proof}

Since $M_{h,w}$ is the composition of the action of $w$ and $M_{h,0}$, it is enough to prove Corollary for $M_{h,0}$. 
The fact that $M_{h,0}$ is well defined follows from Lemma \ref{G}. 
Assume that $h$ is injective. Take two different points $x_1,x_2 \in M_{L_1}$,  
let $x_i = p_i \times g_i/_{\lambda_1}$. We may assume that $p_i \in L_1 < C(L_1)$, otherwise injectivity is trivial.
If $p_1 \neq p_2$, then $h(p_1) \neq h(p_2)$ and $M_{h,0}(x_1) \neq M_{h,0}(x_2)$.
If $p = p_1 = p_2$, then $g_1g_2^{-1} \notin G_{F(p)} = G_{F(h(p))}$ and $M_{h,0}(x_1) \neq M_{h,0}(x_2)$. 
\end{proof}

Let $L^n$ be a simplicial complex and $M_L$ its small cover associated to $\lambda \colon L^{[0]} \to \B Z_2^{n+1}$. 
If $g \in \B Z_2^{n+1}$, then by $|g|$ we denote the number of nonzero coordinates of $|g|$. 
Let $c$ be a simplicial chain in the barycentric subdivision $L^b$, i.e. $c$ is a formal sum of simplices of $L^b$. 
Note that $C(c)$, the cone over $c$, is itself a chain in $C(L^b)$.  
We define the \textbf{lift} of $C(c)$ to $M_L$ by
$M_c = \sum_{g \in \B Z_2^{n+1}} (-1)^{|g|}(C(c) \times g)$.

\begin{lemma}\label{lift}
Let $c$ be a chain in $L^b$ and let $\sigma \in L^b \times g < M_L$ for some $g \in \B Z_2^{n+1}$.
Then $\sigma$ does not appear in $\partial M_c$.
\end{lemma}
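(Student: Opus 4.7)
The plan is to compute $\partial M_c$ and show that the coefficient of any simplex $\sigma \subset L^b \times g$ in $M_L$ vanishes.

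First I would decompose the boundary of a single summand. For a $k$-simplex $\tau \in L^b$, writing $C(\tau)=[\tau,w]$ with $w$ the apex of the cone gives $\partial C(\tau) = (-1)^{k+1}\tau + C(\partial \tau)$. Summing,
\[
\partial(C(c)\times g) \;=\; \bigl(\pm c\bigr)\times g \;+\; C(\partial c) \times g,
\]
where the first summand consists of \emph{base simplices} (lying in $L^b\times g$) and all simplices of the second summand contain the $g$-th apex $w_g$; I call the latter \emph{cone simplices}.

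Next I would observe that cone simplices in different copies of $C(L^*)$ remain distinct in $M_L$: the identification $\sim_\lambda$ only acts on points of $L^*$, and $w_g \notin L^*$, so $w_g \neq w_{g'}$ in $M_L$ whenever $g \neq g'$. Consequently the only contributions to the coefficient of a base simplex $\sigma$ in $\partial M_c$ come from the base parts of the various $\partial(C(c)\times g')$. Since $\sigma \times g$ and $\sigma \times g'$ represent the same element of $M_L$ precisely when $g' - g \in \B{G}_{F(\sigma)}$ (with $F(\sigma)$ the minimal face containing $\sigma$), this coefficient equals
\[
\varepsilon\, a_\sigma \sum_{g' \in g + \B{G}_{F(\sigma)}} (-1)^{|g'|},
\]
where $a_\sigma$ is the coefficient of $\sigma$ in $c$ and $\varepsilon$ is a sign depending only on $\dim\sigma$.

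Finally I would kill the inner sum. By Lemma \ref{passing_to_sub} we may assume $\lambda$ is folding on a simplex, so $\B{G}_{F(\sigma)}$ is a nontrivial coordinate subspace of $\B{Z}_2^{n+1}$ and contains some standard basis vector $e_i$. The map $g' \mapsto g' + e_i$ is a fixed-point-free involution on the coset $g + \B{G}_{F(\sigma)}$ that flips the parity of $|g'|$, so its terms pair up with opposite signs and the sum vanishes.

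The main obstacle is the careful bookkeeping of identifications in $M_L$: verifying that cone simplices do not accidentally get identified across different copies of $C(L^*)$ (which justifies separating base and cone contributions), and that every lift of a base simplex is counted exactly once, namely once for each element of the coset $g + \B{G}_{F(\sigma)}$.
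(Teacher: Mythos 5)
Your proof is correct and follows essentially the same route as the paper's: identify the copies of $\sigma$ that are glued together in $M_L$ --- exactly those indexed by the coset $g+\B G_{F(\sigma)}$ of the stabilizer of $\sigma$ --- and cancel the resulting signed sum $\sum_{g'\in g+\B G_{F(\sigma)}}(-1)^{|g'|}$ by a parity involution. One remark: the step ``by Lemma \ref{passing_to_sub} we may assume $\lambda$ is folding on a simplex'' is not literally a reduction (replacing $L$ by $L^b$ and changing $\lambda$ changes $M_L$, hence the statement being proved), but everywhere the lemma is applied in the paper $\lambda$ is indeed folding on a simplex, and your extra care is warranted: the cancellation genuinely uses that $\B G_{F(\sigma)}$ contains an odd-weight vector (e.g.\ some $e_i$), a hypothesis the paper's own proof, which simply asserts equally many plus and minus signs, leaves implicit.
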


\begin{proof}
Note that $\partial M_c =  \sum_{g \in \B Z_2^{n+1}} (-1)^{|g|}(\partial C(c) \times g)$.
So the Lemma is clear if $\sigma$ does not appear in $\partial C(c) \times g$. 
Assume on the other hand that it appears in $\partial C(c) \times g$ and $F$ is the smallest face containing $\sigma$. 
By the construction $Stab(\sigma) = \lambda(F)$ and $\sigma$ is glued exactly with the copies of $\sigma$ contained in $\partial C(c) \times g'$,
where $g' = g + x$ and $x \in \lambda(F)$. 
Thus in $M_c$, $\sigma$ appears $|\lambda(F)| = 2^k$ times (for some $k$), the same number of times with the sign plus and minus. 
\end{proof}

\subsection{Surgery}

Let $M$ be an $(n>3)$-dimensional manifold. 
The boundaries of $H_i = S^i \times D^{n-i}$ and $H_{n-i-1} = D^{i+1} \times S^{n-i-1}$ are homeomorphic and equal to $S^i \times S^{n-i-1}$. 
Thus every embedding (called framing) $f \colon H_i \to M$ defines also an embedding $\partial f \colon \partial H_{n-i-1} \to M$.
We consider a manifold $M' = M \backslash f(H_i) \cup_{\partial f} H_{n-i-1}$ where $\partial H_{n-i-1}$ is glued to 
$\partial(M \backslash f(H_i))$ by $\partial f$. 
This procedure is called a surgery of index $i+1$. 
We present sketch of a proof of the following classical lemma.

\begin{lemma}\label{surgery}
Let $X$ be a topological space and let $M$ be a compact, oriented, $n$-dimensional manifold, $n > 3$.
Assume that $\Gamma_X = \pi_1(X)$ is finitely generated. 
Then for every map $f_M \colon M \to X$ there exists a sequence of surgeries of index $1$ and $2$, which results in a manifold $M'$ such that:
a) there exist a map $f_{M'} \colon M' \to X$ such that $\pi_1(f_{M'})$ is an isomorphism, b) $f_M([M])$ = $f_{M'}([M'])$. 
\end{lemma}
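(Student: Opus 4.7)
I follow the classical two-step surgery procedure: first perform index-$1$ surgeries so that $\pi_1$ of the resulting manifold surjects onto $\Gamma_X$, then perform index-$2$ surgeries to kill the remaining kernel. At every step I extend $f_M$ over the trace of the surgery, producing at the end an oriented cobordism $W$ from $M$ to $M'$ together with a map $F\colon W\to X$ restricting to $f_M$ and $f_{M'}$; equality (b) will be a formal consequence of this cobordism.

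\textbf{Step 1 (index-$1$ surgeries, surjectivity).} Choose a finite generating set $\gamma_1,\ldots,\gamma_k$ of $\Gamma_X$ and represent each $\gamma_i$ by a loop $\alpha_i\colon S^1\to X$. Successively take the connect sum of $M$ with copies of $S^1\times S^{n-1}$; recall that $M\#(S^1\times S^{n-1})$ is precisely the result of an index-$1$ surgery on an embedded $S^0\times D^n\subset M$, the trace being a $1$-handle $D^1\times D^n$. The handle is contractible, so $f_M$ extends across it once one picks a path in $X$ joining the images of the two attaching points; arrange this so that the new $S^1$ factor is sent to $\alpha_i$. After $k$ such surgeries the induced map on $\pi_1$ hits every $\gamma_i$ and is therefore surjective.

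\textbf{Step 2 (index-$2$ surgeries, injectivity).} Since $M$ is compact, the fundamental group of the manifold produced in Step~1 is finitely presented, so the kernel $N$ of the induced map to $\Gamma_X$ is the normal closure of finitely many elements $\eta_1,\ldots,\eta_l$. Because $n>3$, a generic smooth representative of each $\eta_j$ is an \emph{embedded} circle (two $1$-submanifolds in an $n$-manifold with $n\geq 3$ are generically disjoint), and orientability of $M$ makes its normal bundle trivial, giving a framing $S^1\times D^{n-1}\hookrightarrow M$. The associated index-$2$ surgery replaces $S^1\times D^{n-1}$ by $D^2\times S^{n-2}$: by van~Kampen the effect on $\pi_1$ is to quotient by the normal closure of $\eta_j$, and no new generator is introduced because $n-2\geq 2$ makes $S^{n-2}$ simply connected. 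The attaching circle is null-homotopic in $X$ since $\eta_j\in N$, so the map extends over the $2$-handle. After $l$ such surgeries the induced map on $\pi_1$ is an isomorphism, which is~(a).

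\textbf{Verification of (b) and main obstacle.} Concatenating the traces yields a compact oriented $(n+1)$-manifold $W$ with $\partial W=M\sqcup(-M')$ together with a map $F\colon W\to X$ extending both $f_M$ and $f_{M'}$. From the long exact sequence of the pair $(W,\partial W)$, the classes $[M]$ and $[M']$ have the same image in $H_n(W)$, whence $f_M([M])=F_*([M])=F_*([M'])=f_{M'}([M'])$ in $H_n(X)$. The one genuinely non-formal point is Step~2: realising each normal generator of $N$ by an embedded, properly framed circle whose surgery both kills the desired class and introduces no unwanted ones — this is precisely the role of the hypothesis $n>3$.
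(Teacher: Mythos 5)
Your argument is correct and is essentially the paper's own proof, only written with more care: index-$1$ surgeries (handle attachments) realizing a finite generating set of $\Gamma_X$ to get surjectivity, then index-$2$ surgeries along embedded framed circles representing normal generators of the kernel, extending the map over the handles and deducing (b) from the trace cobordism. The one caveat, which your write-up shares with the paper's sketch, is that the kernel in Step~2 is normally finitely generated only when $\Gamma_X$ is finitely \emph{presented} (finite presentability of $\pi_1$ of the surgered manifold alone does not suffice); this is harmless for the paper's applications, where $X$ is a compact aspherical complex, but is worth noting since the statement assumes only finite generation.
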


\begin{proof}

First we modify the map $f_M$ to be an epimorphism. 
For this we use surgery of index $1$, that is we attach handles. 
Let $\gamma_i$ be a set of loops in $X$ which represents generators of $\Gamma_X$.  
For each $\gamma_i$ we attach a handle to $M$. We call the new manifold $M_0$. 
We can extend $f_M$ to $f_{M_0}$ such that if we take a path which goes along the handle and connects its ends, it is mapped to $\gamma_i$. 
The homology class of the image does not change and $f_{M_0}$ is an epimorphism. 
Now we need to fill up some loops which are in $N = ker(\pi_1(f_{M_0}))$. 
The subgroup $N$ is normally finitely generated. 
Let ${\eta_i}$ be a set of normal generators of $N$. 
Then we can perform a surgery of index $2$ on $M_0$ along these loops, obtaining a manifold $M'$. 
Since images of loops are contractible in $X$, the map $f_{M_0}$ can be extended to a map $f_{M'}$.
Moreover, the homology class does not change and $f_{M'}$ is an isomoprhism. 

\end{proof}
 
\section{Counterexamples}\label{counterexamples}

\subsection{The construction}\label{construction}

In this section we define manifolds $M^n_k$. It is done in three steps.  

\subsubsection{Step 1: The complex $L^n_k$}

Let $D_0$ be an oriented $(n>3)$-dimensional closed disk. 
Let $D_1,\ldots,D_{k-1} \subset D_0$ be a collection of pairwise disjoint subspaces of $D_0$ homeomorphic to closed $n$-discs. 
We define a complex $L = L^n_k$ to be the following space: start with $D_0 \backslash (intD_1 \cup \ldots \cup intD_{k-1})$ 
and glue each boundary $\partial D_i$, $i > 0$, with $\partial D_0$ by orientation preserving homeomorphisms. 
Note that we have an inclusion $i \colon \partial D_0 \to L^n_k$. 
The space $L^n_k$ is a manifold except the singular sphere $S = i(\partial D_0)$ where we have a ramification
of degree $k$. On $S$ we have the orientation induced from $\partial D_0$. 
 
\subsubsection{Step 2: Small covers}  
We pick a triangulation on $L = L^n_k$ and assume that it admits a folding on a simplex characteristic function 
$\lambda \colon L^{[0]} \to \B Z_2^{n+1}$. 
By $L^*$ we denote the complex dual to $L$. 
Moreover, assume that the restriction of $\lambda$ to vertices of $S$ is again a folding on a simplex characteristic function
for the complex $S$, which ranges in the subspace spanned by the first $n$ generators of $\B Z_2^{n+1}$.
Such a triangulation exists. Indeed, having any triangulation on $L$ we can pass to the barycentric subdivision 
and use the characteristic function defined in the proof of Lemma \ref{passing_to_sub}. It satisfies the above assumptions. 

Let $p_L \colon M_L \to C(L^*)$ be the small cover defined by $\lambda$.
Let $p_S \colon M_S \to C(S^*)$ be the small cover defined by $\lambda_S$, where $\lambda_S$ is the restriction of $\lambda$ to the vertices of $S$.
Note that $M_S$ is an oriented manifold.	 
By Corollary \ref{inclusion}, in the complex $M_L$ we see two copies of $M_S$. Namely they are
$$
M_S^0 =  C(S^*) \times (\B Z_2^n \times {0})/_{\sim_0} < M_L = C(L^*) \times \B Z_2^{n+1}/_{\sim_{\lambda}}
$$
$$
M_S^1 =  C(S^*) \times (\B Z_2^n \times {1})/_{\sim_1} < M_L = C(L^*) \times \B Z_2^{n+1}/_{\sim_{\lambda}}. 
$$

By the relation $\sim_i$ we mean the relation $\sim_{\lambda}$ restricted to $M_S^{i}$.
It coincides with $\sim_{\lambda_S}$ under the obvious identification with $M_S$.
The chain $M_S^{01} = M_S^0-M_S^1$ is an $n$-dimensional cycle, thus defines a class $[M_S^{01}] \in H_n(M_L;\B Z)$.
The signs were chosen so that $M_S^{01}$ is the lift of $C(S^b)$ by $p_L$, as described in the discussion before Lemma \ref{lift}.
We define an oriented manifold  $N = M_S^0 \#(-M_S^1)$. Since $M_L$ is connected, there exists a map  $f_N \colon N \to M_L$
such that the pushforward of the fundamental class equals $[M_S^{01}]$. 
 
\subsubsection{Step 3: Surgery}

Let $M^n_k$ and $f_{M^n_k} \colon M^n_k \to M_L$ be an $n$-dimensional 
manifold and a map obtained by the procedure described in Lemma \ref{surgery} applied to 
$N$ and $f_N$. From Fact \ref{factsmall}(2) we know that $M_L$ is aspherical, thus $f_{M^n_k}$ classifies the universal cover. 
Moreover we have that the pushforward of the fundamental class of $M^n_k$ equals $[M_S^{01}]$.   
   
\subsection{$M^n_k$ is a counterexample to the Dranishnikov rationality conjecture}

\begin{lemma}\label{mv}
	Let $S$ be as in the construction (Step 1). Then $H_{n-1}(L^n_k;\B Z) = \B Z_k$ and $[S]$ is a generator. 
\end{lemma}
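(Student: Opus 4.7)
The plan is to compute $H_{n-1}(L^n_k;\B Z)$ via the long exact sequence of the pair $(L^n_k, S)$, together with Lefschetz duality on the compact oriented manifold-with-boundary $D_0' \defeq D_0 \setminus (\text{int}\,D_1 \cup \ldots \cup \text{int}\,D_{k-1})$. Let $q \colon D_0' \to L^n_k$ be the quotient map; it is a homeomorphism off $\partial D_0'$ and identifies all boundary components with $S$, so excision gives $H_*(L^n_k, S) \cong H_*(D_0', \partial D_0')$.

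Next I would compute $H_*(D_0', \partial D_0')$. Since $D_0'$ deformation retracts onto a wedge of $k-1$ copies of $S^{n-1}$ (one around each removed disc), only $H^0$ and $H^{n-1}$ of $D_0'$ are nonzero, and Lefschetz duality $H_i(D_0', \partial D_0') \cong H^{n-i}(D_0')$ yields $H_n(L^n_k, S) \cong \B Z$, generated by the fundamental class $[D_0', \partial D_0']$, and $H_{n-1}(L^n_k, S) = 0$ (here one uses $n > 3$). Substituting into the long exact sequence gives
\[
0 \to H_n(L^n_k) \to \B Z \xrightarrow{\partial_*} H_{n-1}(S) \to H_{n-1}(L^n_k) \to 0,
\]
so $H_{n-1}(L^n_k) \cong \operatorname{coker}(\partial_*)$. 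The problem reduces to evaluating $\partial_*[D_0', \partial D_0'] = q_*[\partial D_0'] \in H_{n-1}(S) \cong \B Z\langle [S] \rangle$.

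The main obstacle is the orientation bookkeeping in this final step. The boundary orientation induced from $D_0'$ agrees with the natural orientation of $\partial D_0$ but disagrees with the natural orientation of each inner $\partial D_i$ (since $D_0'$ sits on the outside of each removed $D_i$), so $[\partial D_0'] = [\partial D_0] - \sum_{i=1}^{k-1}[\partial D_i]$ in natural orientations. Because the gluings are orientation-preserving with respect to these boundary-induced orientations, each restriction $q|_{\partial D_i}$ with $i \geq 1$ reverses natural orientation and hence $q_*[\partial D_i] = -[S]$, while $q_*[\partial D_0] = [S]$. Summing, $q_*[\partial D_0'] = [S] + (k-1)[S] = k[S]$, so $\operatorname{im}(\partial_*) = k\B Z$ and $H_{n-1}(L^n_k) \cong \B Z_k$ with $[S]$ as generator. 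The two sources of sign flips (from the induced boundary orientation and from the gluings) cancel, producing $k$ copies rather than the naive $|k-2|$.
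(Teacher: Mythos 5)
Your proof is correct, but it follows a genuinely different route from the paper's. The paper does not use the pair $(L^n_k,S)$ or Lefschetz duality: it removes an auxiliary disc $q(D_k)$ from the interior, shows that $L_{\bullet}=L^n_k\setminus q(int D_k)$ deformation retracts onto $S^{n-1}\vee S^1\vee\ldots\vee S^1$, and runs Mayer--Vietoris for $L^n_k=L_{\bullet}\cup q(D_k)$, where the generator of $H_{n-1}$ of the intersection $q(\partial D_k)$ maps to $k[S]$, giving $\B Z_k$ as a cokernel. You instead identify $H_*(L^n_k,S)\cong H_*(D,\partial D)$ for $D=D_0'$ via the quotient map $q$ (strictly, this is the standard good-pair/quotient argument, since $q$ identifies boundary spheres with each other rather than collapsing a subspace, but it is immediate for these CW pairs), compute the relative groups by Lefschetz duality from $D\simeq \bigvee_{k-1}S^{n-1}$, and read off $H_{n-1}(L^n_k)=\operatorname{coker}(\partial_*)$ with $\partial_*$ determined by $q_*[\partial D]=k[S]$. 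The essential arithmetic is identical in both proofs: it is exactly the orientation identity $q_*(\partial D)=kS$ that the paper writes as $\partial(D\setminus D_k)=\partial D_0+\ldots+\partial D_{k-1}-\partial D_k$ and reuses in Lemma \ref{S}(1); you resolve the (somewhat ambiguously stated) gluing convention the same way the paper's proofs require, so that each inner boundary sphere contributes $+[S]$ rather than $-[S]$, and your explicit remark that the two sign flips cancel (avoiding the naive $\B Z_{|k-2|}$) is the right reading. What your route buys: no auxiliary disc, no ad hoc homotopy model for $L_{\bullet}$, and the order of $[S]$ appears directly as the cokernel of multiplication by $k$; what the paper's route buys: it stays with elementary Mayer--Vietoris and avoids relative fundamental classes and duality. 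Both arguments implicitly assume the discs $D_i$ are tamely embedded, so that $D$ is a manifold with boundary of the stated homotopy type.
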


\begin{proof}
It follows from the Mayer-Vietors exact sequence. 
Let $D = D_0 \backslash (intD_1 \cup \ldots \cup intD_{k-1})$. We have a quotient map
$$
q \colon D \to L,
$$
which glues the boundaries of $\partial D_i$ with $\partial D_0$ as in the construction.
Let $D_k$ be another $n$-disk embedded in the interior of $D$.
Let $L_{\bullet} = L \backslash q(intD_k)$. 
To use the Mayer-Vietoris sequence we decompose $L$ as follows: $L = L_{\bullet} \cup q(D_k)$.

We claim that $L_{\bullet}$ is homotopy equivalent to the wedge of $S^{n-1}$ and $k-1$ circles.
Indeed, let $\gamma_i$, $i=1,\ldots,k-1$, be a collection of disjoint arcs in $D \backslash intD_k$. 
Assume that each $\gamma_i$ connects a point on $\partial D_i$ with a point on $\partial D_0$. 
The subspace $X = \partial D_0 \cup (\partial D_1 \cup \gamma_1) \cup \ldots \cup (\partial D_{k-1} \cup \gamma_{k-1})$
is a deformation retract of $D \backslash intD_k$.
We can imagine that we start to inflate the disk $D_k$ such that at the end it fills up all the space between $X$. 
It is easy to see that the space $q(X)$ is homotopy equivalent to the wedge of $S^{n-1}$ and $k-1$ circles.
Moreover, the retraction of $D \backslash intD_k$ to $X$ carries down to a retraction 
of $L_{\bullet}$ to $q(X)$. This proves the claim.    

Note that $L_{\bullet} \cap q(D_k) = q(\partial D_k) \cong S^{n-1}$. 
The Mayer-Vietoris exact sequence reads:
$$
\begin{tikzcd}
H_{n-1}(q(\partial D_k)) \arrow{r}{i} & 
H_{n-1}(q(D_k)) \oplus H_{n-1}(L_{\bullet}) \arrow{r}{s} &
H_{n-1}(L) \arrow{d}{\delta} \\
& & H_{n-2}(q(\partial D_k)) 
\end{tikzcd}
$$
Thus $s$ is an epimorphism. 
Let us take a closer look at $i$.
Since $L_{\bullet} \cong S^{n-1} \vee S^1 \vee \ldots \vee S^1$, we have that $H_{n-1}(q(D_k)) \oplus H_{n-1}(L_{\bullet}) = \B Z$. 
This group is generated by $[S] = q_*[\partial D_0]$. 
Note that we can choose an orientation of $D_k$ such that the following holds:
$$
\partial(D \backslash D_k) = \partial D_0 + \partial D_1 + \ldots + \partial D_{k-1} - \partial D_k.
$$
After applying $q$ to this equation we have that $k[S] - q_*[\partial D_k] = 0$ in $H_{n-1}(L_{\bullet})$.
To finish, we note that $i([\partial D_k]) = q_*[\partial D_k]$. 
It follows that $i$ is the multiplication by $k$, thus the Lemma. 

\end{proof}

Now we are ready to prove the crucial lemma. The equivariant coarsening map 
$ec_* \colon H_*(X;\B Z) \to H^{lf}_*(\wt X;\B Z)$ and the locally finite homology $H^{lf}$ were defined in section \ref{hom.car}.

\begin{lemma}\label{S}
	Let $[M_S^{01}] \in H_n(M_L;\B Z)$ be as in the construction (Step 2). Then 
\begin{enumerate}
	\item $k[M_S^{01}] = 0 \in H_n(M_L;\B Z)$.
	\item $ec_n([M_S^{01}]) \neq 0 \in H^{lf}_n(\wt{M_L};\B Z)$. 
\end{enumerate}
\end{lemma}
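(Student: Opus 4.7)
My plan is to treat the two parts separately. For Part (1), Lemma \ref{mv} supplies a chain $c \in C_n(L^b)$ with $\partial c = k\widetilde{S}$, where $\widetilde{S}$ represents the fundamental cycle of $S$ in $L^b$. I would form the lift $M_c = \sum_{g \in \B Z_2^{n+1}} (-1)^{|g|}(C(c) \times g)$, an $(n{+}1)$-chain in $M_L$, and compute $\partial M_c$ using $\partial C(c) = \pm(c - C(\partial c))$. The part supported in the base $L^b \times g$ vanishes as a chain in $M_L$: for every simplex $\sigma \subset L^b$ its coefficient collapses to a sum of $(-1)^{|g|}$ over a coset of the nontrivial subgroup $\B G_{F(\sigma)}$, which is zero --- this is exactly the cancellation mechanism behind Lemma \ref{lift}. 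What remains is $\partial M_c = \pm k M_{\widetilde{S}}$, and splitting the sum defining $M_{\widetilde{S}}$ according to the last coordinate of $g$ and using that $\lambda_S$ takes values in $\B Z_2^n \times \{0\}$ identifies $M_{\widetilde{S}}$ with $\pm(M_S^0 - M_S^1) = \pm M_S^{01}$ (as asserted in the construction). Thus $k[M_S^{01}] = 0$ in $H_n(M_L;\B Z)$.

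For Part (2), the crucial input is that $M_S$ is an oriented aspherical $n$-manifold by Fact \ref{factsmall}, so $\widetilde{M_S} = \Sigma_{W_S}$ is a contractible oriented $n$-manifold with $H_n^{lf}(\widetilde{M_S};\B Z) \cong \B Z$ generated by the fundamental class. Since $W_S$ embeds as a parabolic subgroup of $W_L$, we have $\pi_1(M_S) \hookrightarrow \pi_1(M_L)$, so every connected component of $p^{-1}(M_S^0) \subset \widetilde{M_L}$ is a closed embedded copy of $\widetilde{M_S}$; the same holds for $p^{-1}(M_S^1)$, with the two families pairwise disjoint since $M_S^0 \cap M_S^1 = \emptyset$ in $M_L$. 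The cycle $ec_n([M_S^{01}])$ is then the locally finite signed sum of the fundamental classes of all these components.

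The main obstacle is showing that this locally finite cycle does not bound. My plan is to exploit the CAT(0) structure of the Davis complex: fix one component $X_0 \subset p^{-1}(M_S^0)$, which, being the embedded image of $\Sigma_{W_S}$ inside $\Sigma_{W_L}$, is a convex subcomplex; the CAT(0) closest-point projection $\pi \colon \widetilde{M_L} \to X_0$ is then a proper retraction. Properness yields a well-defined pushforward $\pi_* \colon H_n^{lf}(\widetilde{M_L};\B Z) \to H_n^{lf}(X_0;\B Z) \cong \B Z$, and I would evaluate $\pi_*(ec_n([M_S^{01}]))$ component by component: $X_0$ contributes its own fundamental class, while every other component is a distinct translate of a copy of $\Sigma_{W_S}$ whose image under $\pi$ is a proper, lower-dimensional subcomplex of $X_0$, and so contributes zero in top locally finite degree. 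The outcome is $\pm 1 \neq 0$, forcing $ec_n([M_S^{01}]) \neq 0$. The delicate point is precisely this degree computation, i.e.\ verifying that no other lift projects onto all of $X_0$ with nonzero degree; if the generic CAT(0) projection is insufficiently explicit, it could be replaced by a combinatorial retraction built from the parabolic structure $W_S \subset W_L$.
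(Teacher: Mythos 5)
Your Part (1) is correct and is essentially the paper's own argument in streamlined form: you lift an $(n{+}1)$-chain $C(c)$ with $\partial c = k\widetilde S$ equivariantly with the signs $(-1)^{|g|}$, kill the base-supported part of the boundary by the cancellation of Lemma \ref{lift}, and are left with $\pm k M_S^{01}$. The paper does the same computation, only routed through the small cover $M_D$ of the disc-with-holes $D$ and the induced map $M_q\colon M_D\to M_L$, which is merely a bookkeeping difference.

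Part (2), however, has a genuine gap, and it is not the "delicate degree computation" you flag but the step before it: the nearest-point projection $\pi\colon \wt{M_L}\to X_0$ onto a convex subcomplex of a CAT(0) space is $1$-Lipschitz but almost never proper (already in $\B R^{n+1}$ the projection onto a hyperplane has line preimages), so it induces no map on locally finite homology and $\pi_*$ is simply undefined; the same objection kills the fallback retraction induced by $W_L\to W_S$, whose fibers are infinite. Moreover the auxiliary claim that every other component projects to a proper lower-dimensional subset of $X_0$ is false in general: two disjoint "parallel" convex copies of $\Sigma_{W_S}$ can project onto each other isometrically. Most tellingly, your argument never uses Lemma \ref{mv}, i.e.\ that $[S]$ is a \emph{nonzero} $k$-torsion class in $H_{n-1}(L;\B Z)$, yet this is exactly what makes $ec_n([M_S^{01}])$ survive: if $[S]$ were zero in $H_{n-1}(L)$, the preimage of $M_S^0\cup M_S^1$ in $\wt{M_L}$ would still be a disjoint locally finite family of copies of $\wt{M_S}$, but $[M_S^{01}]$ (hence its image under $ec_n$) would vanish — compare a single hyperplane in $\B R^{n+1}$, which is trivial in $H^{lf}_n(\B R^{n+1})=0$ because it bounds a half-space. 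So no scheme that detects the class by retracting onto one component and reading off a degree can work. The paper avoids this by collapsing everything outside the open Davis chamber: the chain-level quotient $\rho_*\colon H^{lf}_n(\wt{M_L};\B Z)\to H^{lf}_n(\wt{M_L},(int\,\wt C)^c;\B Z)\cong H_n(C(L),L;\B Z)$ needs no properness, sends $ec_n([M_S^{01}])$ to $[C(S)]$, and the connecting isomorphism of the pair sends $[C(S)]$ to $[S]\neq 0$ by Lemma \ref{mv}. To repair your proof you would need to replace the projection by such a relative (chamber) map, at which point you are back to the paper's argument.
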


\begin{proof}

\begin{enumerate}[leftmargin=*]

\item In Lemma \ref{mv} we defined $D = D_0 \backslash (intD_1 \cup \ldots \cup intD_{k-1})$ and the quotient map
$$
q \colon D \to L.
$$
On $D$ we consider the pullback simplicial structure.  
Let $\lambda_D$ be a folding on a simplex characteristic function for $D$ defined by 
$\lambda_D = \lambda q$. Let $p_D \colon M_D \to C(D^*)$ be the small cover associated to $\lambda_D$.   
By Corollary \ref{inclusion}, the map $q$ lifts to
$$
M_q \colon M_D \to M_L.
$$
We recall the definition of this map: $M_q(p \times x/_{\sim_{\lambda_D}}) = C_q(p) \times x/_{\sim_{\lambda}}$, 
for $p \in C(D), x \in \B Z_2^{n+1}$ and $C_q \colon C(D) \to C(L)$ is the map of cones induced by $q$.
The complex $D$ is oriented. The orientation defines an $n$-dimensional chain representing the relative to the boundary
fundamental class of $D$. 
Abusing notation we call it $D$ as well.
Note that $q_*(\partial D) = kS$. 
Let $M_D$ be the lift of the chain $D$ (for the definition see the discussion before Lemma \ref{lift}).

To simplify the notation we write $C(D)$ for $ C(D) \times 0$.
Thus $g.C(D) = C(D) \times g$, where by $g.C(D)$ we denote the action of $g \in \B Z_2^{n+1}$ on $C(D)$.
We have
\\

\begin{tabular}{ p{0.3cm} r l}
	&$q_*(\partial M_D)$ & $= \hspace{-.3cm} \sum\limits_{ \hspace{.3cm} g \in \B Z_2^{n+1}} \hspace{-.4cm} (-1)^{|g|}q_*(\partial g.C(D))$ \\
&&$= \hspace{-.3cm} \sum\limits_{\hspace{.3cm} g \in \B Z_2^{n+1}} \hspace{-.4cm} (-1)^{|g|}q_*(g.C(\partial D) +g. D)$ \\
&&$= \hspace{-.3cm} \sum\limits_{\hspace{.3cm} g \in \B Z_2^{n+1}} \hspace{-.4cm} (-1)^{|g|}(g.C(q_*\partial D) +g.(q_*D))$ \\
&&$= \hspace{-.3cm} \sum\limits_{\hspace{.3cm} g \in \B Z_2^{n+1}} \hspace{-.4cm} (-1)^{|g|}g.(kC(S)) +  \hspace{-.5cm} \sum
\limits_{\hspace{.3cm} g \in \B Z_2^{n+1}} \hspace{-.4cm} (-1)^{|g|}g.(q_*D)$\\
&&$= kM_S^{01}$. 
\end{tabular}\\
\\

The last equality follows from the following: 
the class $M^{01}_S$ is the lift of $C(S)$ by $\lambda_S$, thus equals $\sum (-1)^{|g|}g.C(S)$. 
Hence, if we show that $\sum (-1)^{|g|}g.D = 0$, we are done. 
Note that $D = \partial C(D) - C(\partial D)$. If we sum over $\B Z_2^{n+1}$ action, we have 
\\

\begin{center}
  \begin{tabular}{c}
 $\hspace{-.3cm} \sum\limits_{\hspace{.3cm} g \in \B Z_2^{n+1}} \hspace{-.4cm} (-1)^{|g|}g.D = 
 \partial M_D - \hspace{-.5cm} \sum\limits_{\hspace{.3cm} g \in \B Z_2^{n+1}} \hspace{-.4cm}  (-1)^{|g|} g.C(\partial D) = 0$.
\end{tabular}
\end{center}

Indeed, if $\sigma < g.D$ for some $g$, then $\sigma$ does not appear in $\sum (-1)^{|g|}g.C(\partial D)$ 
(every simplex of $g.C(\partial D)$ lies inside the cone $g.C(D)$) nor in $\partial M_D$ (Lemma \ref{lift}).
On the other hand, if $\sigma \nless g.D$ for every $g$, then it does not appear in $\sum (-1)^{|g|}g.D$. 
\\  
\item Consider the map
$$
\begin{tikzcd}
	H_*(M_L;\B Z) \arrow{r}{ec_*} &H^{lf}_*(\widetilde{M_L};\B Z).
\end{tikzcd}
$$
Let $\wt C$ be a lift of $C(L) \times 0 < M_L$ to the universal cover $\widetilde{M_L}$. By $\wt L$ we denote the base of the cone $\wt C$.
We remark that $\widetilde{M_L}$ is the Davis complex for $W_L$ and $\wt C$ is a fundamental domain of the $W_L$ action.
Let $int \wt C = \wt C - \wt L$.

Let $\rho$ be the homomorphism induced by the quotient map
$$
\rho_* \colon H_*^{lf}(\widetilde{M_L};\B Z) \to H_*^{lf}(\widetilde{M_L}, (int \wt C)^c; \B Z).
$$
Since $\wt C$ is a finite complex, it is straight-forward to prove that 
the locally finite homology of a pair $(\widetilde{M_L} , (int \wt C)^c)$ is isomorphic to the standard homology. 

The following equality holds by the excision theorem
$$
H_*(\widetilde{M_L} , (int \wt C)^c; \B Z) = H_*(\wt C,\wt L; \B Z).
$$
Of course 
$$
H_*(\wt C,\wt L; \B Z) = H_*(C(L),L; \B Z).
$$
Thus we can write that (compare with \cite{dlf}, where such maps were used to compute the locally finite homology of Coxeter groups)
$$
\rho_* \colon H^{lf}_*(\widetilde{M_L};\B Z) \to H_*(C(L),L; \B Z).
$$

From the definition of the comparision maps we see that
$\rho_nec_n[M_S^{01}] = [C(S)] \in  H_n(C(L),L; \B Z)$.
By the long exact sequence of the pair, the boundary map
$\delta \colon H_n(C(L),L; \B Z) \to H_{n-1}(L; \B Z)$ is an isomorphism. Moreover, $\delta([C(S)]) = [S]$. 
By Lemma \ref{mv}, the class $[S]$ is nonzero in $H_{n-1}(L;\B Z)$. Thus the class $ec_n([M_S^{01}])$ is nonzero. 
\end{enumerate}
\end{proof}

\begin{remark}\normalfont
Let $H^{ae}_*(M_L;\B Z)$ be the almost equivariant homology defined in \cite{dranish.nowa}. Let $ae_* \colon H_*(M_L;\B Z) \to H^{ae}_*(M_L;\B Z)$.
For $\B Z$ coefficients, $ae$ homology is isomorphic to the Block-Weinberger uniformly finite homology of $\pi_1(M_L)$, defined in \cite{MR1145337}.
The map $ec_*$ factors through $H^{ae}_*(M_L;\B Z)$, hence $ae_n(M^{01}_S)$ is nontrivial and torsion. 
To our knowledge, it is the first example of torsion class in the uniformly finite homology of a group. 
\end{remark}

Now we can prove that $M_k^n$ are counterexamples to the Rationality Conjecture.

\begin{theorem}
Manifolds $M^n_k$ are macroscopically large and rationally inessential.
\end{theorem}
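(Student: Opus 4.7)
The plan is to apply Theorem \ref{dranish} together with Lemma \ref{S}, reading off everything we need from the three-step construction in Section \ref{construction}.

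First, I would set up the classifying data. By Fact \ref{factsmall}(2), $M_L$ is aspherical, so it is a model for $B\pi_1(M_L)$. The surgery in Step 3 (Lemma \ref{surgery}) was performed precisely so that $\pi_1(f_{M^n_k}) \colon \pi_1(M^n_k) \to \pi_1(M_L)$ is an isomorphism and the pushforward of the fundamental class is preserved from $N$. Since $N$ comes with a map $f_N \colon N \to M_L$ pushing its fundamental class to $[M_S^{01}]$, we conclude $(f_{M^n_k})_*[M^n_k] = [M_S^{01}] \in H_n(M_L;\B Z)$, and $f_{M^n_k}$ classifies the universal cover of $M^n_k$.

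Rational inessentiality then follows immediately from Lemma \ref{S}(1): since $k[M_S^{01}] = 0$ in $H_n(M_L;\B Z)$, the class $(f_{M^n_k})_*[M^n_k]$ is $k$-torsion, hence vanishes after tensoring with $\B Q$. This shows $M^n_k$ is rationally inessential by the definition recalled in the introduction.

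For macroscopic largeness, I invoke Theorem \ref{dranish}(1): it suffices to show $ec_n (f_{M^n_k})_*[M^n_k] \neq 0$ in $H^{lf}_n(\widetilde{M_L};\B Z)$. But this coarsening class is exactly $ec_n[M_S^{01}]$, which is nonzero by Lemma \ref{S}(2). Therefore $M^n_k$ is macroscopically large.

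There is really no substantial obstacle remaining at this point; the work has been done in Lemma \ref{S}. The only thing to double-check is the compatibility of orientations and pushforwards through the surgery in Step 3 (that the handle attachments of index $1$ and $2$ do not alter the image of the fundamental class in $H_n(M_L;\B Z)$), but this is guaranteed by the conclusion of Lemma \ref{surgery}(b). With that, the two conclusions—rationally inessential and macroscopically large—follow in a single line each from Lemma \ref{S}(1) and Lemma \ref{S}(2), respectively.
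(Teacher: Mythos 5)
Your proposal is correct and follows exactly the paper's own argument: identify $(f_{M^n_k})_*[M^n_k]$ with $[M_S^{01}]$ via Step 3, get rational inessentiality from Lemma \ref{S}(1), and get macroscopic largeness from Lemma \ref{S}(2) combined with Theorem \ref{dranish}, using that $M_L$ is aspherical so $f_{M^n_k}$ classifies the universal cover. Nothing to add.
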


\begin{proof}

By Step 3 of the construction, we have that  $(f_{M^n_k})_*[M^n_k] = [M_S^{01}]$. 
From Lemma \ref{S}(2), $ec_*([M_S^{01}])$ does not vanish and $f_{M^n_k}$ classifies the universal cover. 
We can apply Theorem \ref{dranish}. Thus $M^n_k$ is macroscopically large. 
By Lemma \ref{S}(1) we have that $M^n_k$ is rationally inessential. 

\end{proof}

\subsection{$M^n_k$ and PSC metrics.}

Motivating by the Gromov conjecture, we address the following question: does $M^n_k$ admit a PSC metric?
In this section we make a small step towards answering this question.
Namely, we prove using a result of Bolotov and Dranishnikov, that if $M^n_k$ is spin, then it does not admit a PSC metric.
Thus we support the Gromov conjecture in this case. 
We show as well, that our construction provides us with many examples of such manifolds.  

\subsubsection{Some remarks on spin structures}\label{spindisc}
Let $M$ be an oriented $n$-dimensional manifold. Let $P_{SO}(TM)$ be the principal $SO_n$ bundle associated to the tangent bundle of $M$. 
A spin structure on $M$ is a two sheeted covering of $P_{SO}(TM)$ which is connected over a fiber of $P_{SO}(TM)$. 
There may be many spin structures on $M$. 
Such a structure exists if and only if the second Stiefel-Whitney class $w_2(M)$ vanishes. 
Let $f \colon S^i \times D^{n-i} \to M$ be an embedding. 
One can always pick a framing $f$ such that a given spin structure on $M$ extends uniquely from $M \backslash f(S^i \times D^{n-i})$
to the result of the surgery with respect to $f$. 
Since $S^i \times D^{n-i}$ admits an unique spin structure for $i \neq 1$, the choice of $f$ is important only if $i=1$.

\begin{lemma}\label{smallisspin}
Let $S$ be a triangulated $n$-dimensional sphere
and $\lambda \colon S^{[0]} \to \B Z_2^{n+1}$ be a folding on a simplex characteristic function. 
Let $p \colon M_S \to C(S^*)$ be the small cover associated to $\lambda$. 
Then $w_i(M_S) = 0$ for $i>0$. In particular, $M_S$ is orientable and spin.
\end{lemma}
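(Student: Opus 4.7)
The plan is to apply the Davis--Januszkiewicz presentation of $H^*(M_S;\B Z_2)$ together with their total Stiefel--Whitney class formula for small covers, and then combine the linear relations coming from $\lambda$ with the Stanley--Reisner relations forced by the folding-on-a-simplex condition to kill every positive-degree term of $w(M_S)$ at one stroke.

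Concretely, from \cite{DJ} the mod-$2$ cohomology of the $(n{+}1)$-manifold $M_S$ is generated by degree-one classes $v_{F_v} \in H^1(M_S;\B Z_2)$, one for each vertex $v \in S^{[0]}$ (equivalently, one for each codimension-one face $F_v$ of $C(S^*)$), subject to the Stanley--Reisner relations
\[
\prod_{v \in T} v_{F_v} = 0 \quad\text{whenever } T \subseteq S^{[0]} \text{ does not span a simplex of } S,
\]
and the linear relations
\[
\sum_{v \in S^{[0]}} \lambda(v)_j\, v_{F_v} = 0 \qquad (j = 0, \ldots, n).
\]
The same paper provides the total Stiefel--Whitney class formula
\[
w(M_S) \;=\; \prod_{v \in S^{[0]}}(1 + v_{F_v}).
\]

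Now write $\lambda(v) = e_{c(v)}$ for the colouring $c \colon S^{[0]} \to \{0, \ldots, n\}$ encoded by the folding-on-a-simplex assumption, and set $V_j = c^{-1}(j)$. If $v, v' \in V_j$ are distinct, then $[v, v']$ cannot be a $1$-simplex of $S$: otherwise the characteristic condition at this edge would force $\lambda(v) \neq \lambda(v')$. Hence no subset of $V_j$ of size at least two spans a simplex, and by Stanley--Reisner every product of two or more of the classes $\{v_{F_v}\}_{v \in V_j}$ vanishes. Therefore
\[
\prod_{v \in V_j}(1 + v_{F_v}) \;=\; 1 + \sum_{v \in V_j} v_{F_v},
\]
and the $j$-th linear relation reads precisely $\sum_{v \in V_j} v_{F_v} = 0$. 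Grouping the product in the Stiefel--Whitney formula by colour class,
\[
w(M_S) \;=\; \prod_{j=0}^{n} \prod_{v \in V_j}(1 + v_{F_v}) \;=\; \prod_{j=0}^{n} 1 \;=\; 1,
\]
so $w_i(M_S) = 0$ for every $i > 0$; in particular $w_1 = w_2 = 0$, whence $M_S$ is orientable and spin.

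The only mildly delicate point is transferring the Davis--Januszkiewicz statements, which are phrased for small covers over simple convex polytopes, to the paper's small covers over the mirror-structured cone $C(S^*)$. When $S$ is a triangulated $n$-sphere, $C(S^*)$ carries the face poset of a simple $(n{+}1)$-polytope and the two constructions coincide, so this translation is routine book-keeping rather than a genuine obstacle.
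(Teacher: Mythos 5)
Your computation is essentially sound, but it takes a genuinely different route from the published proof. You derive $w(M_S)=1$ from the Davis--Januszkiewicz presentation of $H^*(M_S;\B Z_2)$ (Stanley--Reisner relations plus the linear relations coming from $\lambda$) together with their product formula for the total Stiefel--Whitney class, grouping the factors by colour classes $V_j$, where same-coloured vertices are never adjacent (so degree $\geq 2$ terms die) and the $j$-th linear relation kills $\sum_{v\in V_j}v$. The paper's proof never touches the cohomology ring: it quotes the fact that a small cover is stably parallelizable, $TM_S\oplus\epsilon^r=\epsilon^s$ (\cite[Cor. 6.10, Lemma 1.14]{DJ}), and concludes $w(TM_S)=w(\epsilon^s)=1$ by the Whitney product formula. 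Your route buys an explicit ring-level computation (and shows concretely which degree-one classes vanish); the paper's route buys brevity, a stronger statement (stable parallelizability kills all characteristic classes at once), and freedom from any hypotheses about the combinatorics of $S$ beyond those in the lemma.

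The one place where your write-up has a real defect is the closing ``routine book-keeping'' paragraph: the assertion that for a triangulated $n$-sphere $S$ the cone $C(S^*)$ ``carries the face poset of a simple $(n{+}1)$-polytope'' is false in general, since not every triangulated sphere is polytopal (Barnette's $3$-sphere is a classical counterexample), and neither the lemma nor the paper's construction --- which uses barycentric subdivisions of arbitrary triangulations --- guarantees polytopality. So you cannot literally invoke statements of \cite{DJ} proved for small covers over simple convex polytopes; you must either argue that their computation of $H^*(\,\cdot\,;\B Z_2)$ and the Stiefel--Whitney formula extend to small covers over the mirrored cone $C(S^*)$ for an arbitrary triangulated sphere (the arguments do go through: the equivariant cohomology is the Stanley--Reisner ring for any $S$, and passing to ordinary cohomology uses only that the Stanley--Reisner ring of a triangulated sphere is Cohen--Macaulay), or cite a reference stating this generality explicitly. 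Alternatively, sidestep the issue entirely, as the paper does, via stable parallelizability, which is available in exactly the generality of the lemma.
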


\begin{proof}

By \cite[Cor. 6.10, Lemma 1.14]{DJ} (compare as well \cite[Prop. 1.4]{D}) $M_S$ is stably parallelizable, 
i.e.: $TM_S \oplus \epsilon^r = \epsilon^s$ for some $r,s \geq 0$, 
where $TM_S$ is the tangent bundle of $M_S$ and $\epsilon$ is the trivial bundle. 
Let $w = \sum_{i \geq 0}w_i$ be the total Stiefel-Whitney class.
In the following we use the Whitney product formula and the fact that $w(\epsilon^s) = 1 \in H^0(M_S;\B Z_2)$:
$$ 
w(\epsilon^r) = w(TM_S \oplus \epsilon^s) = w(TM_S) \cup w(\epsilon^s) = w(TM_S).
$$
By definition $w(M_S) = w(TM_S)$, thus $w_i(M_S) = w_i(\epsilon^r) = 0$ for $i>0$.
\end{proof}

\begin{corollary} 
Assume that the surgery we use in Step 3 of the construction allows us to extend spin structures.
Then $M^n_k$ is spin.
\end{corollary}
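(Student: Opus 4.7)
The plan is to propagate a spin structure through the three steps of the construction of $M^n_k$ and invoke the hypothesis only at the step where framings truly matter.

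First I would note that by Lemma \ref{smallisspin}, every Stiefel--Whitney class of $M_S$ vanishes; in particular $w_2(M_S) = 0$, so $M_S$ carries a spin structure. Consequently both $M_S^0$ and $(-M_S^1)$ (the latter simply reverses orientation and does not affect the existence of a spin structure) are spin.

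Next, I would interpret the connected sum $N = M_S^0 \# (-M_S^1)$ formed in Step 2 as a surgery of index $1$ on the disjoint union $M_S^0 \sqcup (-M_S^1)$: one removes a pair of small $n$-discs (the image of an embedding of $S^0 \times D^n$) and glues in a $1$-handle $D^1 \times S^{n-1}$. In the notation of section \ref{spindisc} this is the case $i = 0$, where $S^0 \times D^n$ carries a unique spin structure; the framing is irrelevant, and the spin structures on $M_S^0$ and $(-M_S^1)$ glue to a spin structure on $N$.

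Finally, Step 3 obtains $M^n_k$ from $N$ by the finite sequence of index $1$ and index $2$ surgeries described in Lemma \ref{surgery}. The index $1$ surgeries extend the spin structure canonically, exactly as in the previous paragraph. The index $2$ surgeries correspond to the case $i = 1$, which is the only regime in which the framing affects whether the spin structure extends across the handle. By the hypothesis of the corollary these framings are chosen precisely so that the spin structure does extend at each step; iterating over the finitely many surgeries yields a spin structure on $M^n_k$. The main (and essentially only) obstacle is this last step, and it is dispatched immediately by the assumption of the corollary; the general remark from section \ref{spindisc} that such framings can always be chosen shows that the hypothesis is nontrivially satisfiable.
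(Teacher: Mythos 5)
Your argument is correct and follows the paper's own proof: Lemma \ref{smallisspin} gives that $M_S$ (hence $M_S^0$ and $-M_S^1$) is spin, the connected sum $N$ is spin since it is an index~$1$ surgery, and the hypothesis on the Step~3 surgeries yields a spin structure on $M^n_k$. Your extra bookkeeping about which surgery indices ($i=1$, i.e.\ index~$2$) actually require the framing hypothesis is a harmless elaboration of the same route.
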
 

\begin{proof}
From Lemma \ref{smallisspin} follows that $M_S$ is spin. 
A manifold $N = M_S^0 \# (-M_S^1)$ is spin as a connected sum of spin manifolds (surgery of index $1$).
The manifold $M_k^n$ is the result of the surgery on $N$, which was arranged such that $M_k^n$ is spin.
\end{proof}
 
\subsubsection{Positive scalar curvature}

The crucial result which we use here is a theorem due to D.~Bolotov and A.~Dranishnikov

\begin{theorem}{\cite[Col.4.4]{MR2578547}}\label{psc}
The Gromov Conjecture holds for spin n-manifolds $M$, having the cohomological dimension $cd(\pi_1(M)) \leq n+3$, and satisfying the Strong Novikov Conjecture.
\end{theorem}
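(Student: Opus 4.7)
Since Theorem \ref{psc} is a deep result of Bolotov and Dranishnikov, stated here as Corollary~4.4 of \cite{MR2578547}, my plan is not to reprove it in full but to outline the index-theoretic skeleton of the argument and indicate where each hypothesis enters.

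First I would reduce the claim to a K-theoretic statement. For a spin $n$-manifold $M$, the Dirac operator on $\wt M$ twisted by the Mishchenko bundle defines the Rosenberg index $\alpha(M) \in K_n(C^*_r\pi_1(M))$. The Lichnerowicz--Weitzenb\"ock formula shows that a PSC metric makes this Dirac operator invertible on the nose, so $\alpha(M) = 0$; this is the only place the spin hypothesis is used. Factoring $\alpha(M) = \mu(\beta(M))$, where $\beta(M) \in KO_n(B\pi_1(M))$ is the $KO$-fundamental class and $\mu$ is the Baum--Connes assembly map, the Strong Novikov Conjecture gives rational injectivity of $\mu$. Hence $\beta(M) = 0$ rationally.

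Next I would translate this K-theoretic vanishing into a conclusion about ordinary homology. Via the Chern character and the Atiyah--Hirzebruch spectral sequence, rational vanishing of $\beta(M)$ imposes constraints on the class $f_*[M] \otimes \B Q \in H_n(B\pi_1(M);\B Q)$. The hypothesis $cd(\pi_1(M)) \leq n+3$ enters precisely to bound the length of this spectral sequence, making the translation between K-theoretic and homological information efficient enough to be useful in the top degree.

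Finally, and this is where I expect the main obstacle, one has to upgrade the classical homological statement to a coarse one. By Theorem \ref{dranish}, $dim_{mc}(\wt M) \leq n-2$ is equivalent to the existence of a bounded homotopy of $\wt f \colon \wt M \to E\pi$ into the $(n-2)$-skeleton $E\pi^{[n-2]}$. The plan is to realise $\beta(M)$ as a coarse (equivariant) K-homology class on $E\pi$ and to deform the classical skeletal reduction of Example \ref{ex} to a bounded homotopy using the vanishing established in the first two steps. This coarse index-theoretic bridge is the true technical heart of \cite{MR2578547}, and it is what the finite cohomological dimension hypothesis is ultimately used for.
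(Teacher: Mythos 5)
First, a point of order: the paper does not prove Theorem \ref{psc} at all --- it is quoted verbatim, with citation, as Corollary 4.4 of Bolotov--Dranishnikov \cite{MR2578547} and used as a black box in the PSC section. So there is no proof in the paper to compare yours against; what you have written is an outline of the external reference, and it has to be judged as such.

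As an outline it starts correctly (Lichnerowicz plus the Mishchenko--Fomenko/Rosenberg index, Strong Novikov giving rational injectivity of assembly, hence vanishing of the rational $KO$-fundamental class in $KO_n(B\pi_1(M))\otimes \B Q$; note it is $KO$, not $K$, throughout), but the rest of the plan has a genuine gap, and the gap is exposed by this very paper. Your second and third steps retain from the index vanishing essentially the homological shadow ``constraints on $f_*[M]\otimes\B Q$'' and then propose to bound-homotope the classical skeletal reduction of Example \ref{ex}. But the main theorem of the paper you are reading produces manifolds $M^n_k$ that are rationally inessential ($f_*[M]\otimes\B Q=0$), whose fundamental groups are subgroups of right angled Coxeter groups (so they satisfy Baum--Connes, hence SNC) with $cd(\pi_1)\leq n+1\leq n+3$, and which are nevertheless macroscopically large: by Lemma \ref{S} and Theorem \ref{dranish} there is not even a bounded homotopy into the $(n-1)$-skeleton. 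Hence no argument whose only input from the PSC hypothesis is rational (in)essentiality, together with the $cd$ bound and SNC, can yield $dim_{mc}(\wt M)\leq n-2$; the actual Bolotov--Dranishnikov proof must and does exploit finer consequences of the vanishing of the index than the top-degree class, and the ``coarse bridge'' you defer to is exactly the part that cannot be waved at. Two smaller inaccuracies: Theorem \ref{dranish}(2) as stated characterizes macroscopic largeness ($dim_{mc}=n$) via bounded homotopies into $E\pi^{[n-1]}$; it is not the asserted equivalence between $dim_{mc}\leq n-2$ and bounded homotopy into $E\pi^{[n-2]}$ (only the easy implication from the skeleton statement to the dimension bound is immediate). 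And bounding the Atiyah--Hirzebruch spectral sequence is not what $cd(\pi_1(M))\leq n+3$ is for in \cite{MR2578547}; it enters the deformation/obstruction-theoretic part of their argument. In short: acceptable as a signpost to the literature, but not as a proof, and its middle reduction is refuted by the counterexamples constructed in this paper.
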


\begin{theorem}
If $M^n_k$ is spin, then it does not admit a Riemannian metric of positive scalar curvature. 
\end{theorem}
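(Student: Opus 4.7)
The plan is to verify the hypotheses of Theorem \ref{psc} for $M^n_k$ and then invoke the already established fact that $M^n_k$ is macroscopically large to rule out PSC. By hypothesis $M^n_k$ is spin, so three conditions remain: the fundamental group $\pi_1(M^n_k)$ must have cohomological dimension at most $n+3$, and it must satisfy the Strong Novikov Conjecture; the conclusion of Theorem \ref{psc} then forces $dim_{mc}(\wt M^n_k) \leq n-2$, contradicting macroscopic largeness.

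By Step 3 of the construction, the classifying map $f_{M^n_k} \colon M^n_k \to M_L$ induces an isomorphism on $\pi_1$, so $\pi_1(M^n_k) \cong \pi_1(M_L)$. By Fact \ref{factsmall}, $M_L$ is aspherical of dimension $n$, hence $B\pi_1(M^n_k) = M_L$ is a finite $n$-dimensional CW complex and $cd(\pi_1(M^n_k)) \leq n < n+3$.

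The harder point is the Strong Novikov Conjecture. Again by Fact \ref{factsmall}, $\pi_1(M_L)$ is a finite-index torsion-free subgroup of the right-angled Coxeter group $W_L$. Right-angled Coxeter groups act properly and cocompactly on their Davis complex, which carries a $\mathrm{CAT}(0)$ cube complex structure, so $W_L$ has the Haagerup property; by the Higson--Kasparov theorem it therefore satisfies the Baum--Connes conjecture, which in turn implies the Strong Novikov Conjecture. Since SNC is inherited by subgroups, $\pi_1(M^n_k)$ satisfies SNC as well. I expect this to be the step requiring the most care to state precisely (choosing the correct reference for SNC for RACGs versus for arbitrary $\mathrm{CAT}(0)$ cubical groups, and citing the passage of SNC to finite-index subgroups), but conceptually it is routine.

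With the three hypotheses verified, Theorem \ref{psc} asserts that the Gromov Conjecture holds for $M^n_k$: if $M^n_k$ admitted a PSC metric then $dim_{mc}(\wt{M^n_k}) \leq n-2$. But we have already shown that $M^n_k$ is macroscopically large, i.e.\ $dim_{mc}(\wt{M^n_k}) = n$. This contradiction shows that no PSC metric on $M^n_k$ exists.
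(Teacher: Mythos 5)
Your proposal is correct and follows essentially the same route as the paper: verify the hypotheses of Theorem \ref{psc} (Strong Novikov via Baum--Connes for subgroups of right angled Coxeter groups, the cohomological dimension bound from asphericity of $M_L$, spin by assumption) and combine with the already established macroscopic largeness of $M^n_k$. The only slip is that $M_L$ has dimension $n+1$, not $n$ (it is a small cover of the cone $C(L^*)$), so the correct bound is $cd(\pi_1(M^n_k)) \leq n+1$, which still satisfies the required inequality $cd \leq n+3$.
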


\begin{proof}
We check that the assumptions of Theorem \ref{psc} are satisfied. 
It is well known that subgroups of Coxeter groups satisfy the Baum-Connes conjecture, which implies the Strong Novikov Conjecture. 
The inequality for $cd(\pi)$ follows from the fact that $M_L$ is a classifying space of $\pi$, so $cd(\pi) \leq dim(M_L) = n+1$.
We already know that $M_k^n$ is macroscopically large, thus by Theorem \ref{psc}, $M_k^n$ can not admit a metric of positive scalar curvature. 
\end{proof}

\section{Further examples}

In this section we describe a construction of rationally inessential macroscopically large manifolds
which generalizes that from Section \ref{construction}.
Instead of working with small covers, we start with a Davis complex.
Then we find an appropriate subgroups of right angled Coxeter groups and pass to quotients.

Let $X$ be a simplicial complex. 
By $W_X$ we denote the right angled Coxeter group associated to $X$ (as in \ref{properties})
and by $\Sigma_X$ its Davis complex. 
By $g \in W_X$, $l(g)$ we denote the minimal number of generators one needs to express $g$.  
Let $W_X^+ < W_X$ be the subgroup of elements whose Coxeter length is even.
We assume all complexes to be flag and associated Coxeter groups to be infinite. 
The usual fundamental domain of the action of $W_X$ on $\Sigma_X$ is homeomorphic to $C(X)$ and is called the Davis cell.

Let $S < L$ be a pair of compact simplicial complexes. 
We assume that $S$ is an oriented null-bordant manifold such that $[S] \in H(L;\B Z)$ is a nontrivial $k$-torsion class.
Moreover, $S$ can be of codimension bigger than one in $L$, but we assume that $S$ is at least $3$-dimensional. 
Let $D$ be a simplicial chain in $L$ such that $\partial D = kS$. 

The inclusion $S < L$ induces an inclusion on the level of Davis complexes: $\Sigma_S < \Sigma_L$. 
Let $\Gamma_S$ and $\Gamma_L$ be finite index torsion-free subgroups of $W_S^+$ and $W_L^+$ 
such that $\Gamma_S < \Gamma_L$ and $\Gamma_L \cap W_S = \Gamma_S$. 
E.g. $\Gamma_S$ and $\Gamma_L$ can be taken to be the derived subgroups of $W_S$ and $W_L$, respectively
(the derived subgroup of a right angled Coxeter group is torsion free, the proof is analogous to that of Fact \ref{factsmall}(1)). 
Then $\Gamma_S$ and $\Gamma_L$ act freely, orientation preserving and cocompactly on $\Sigma_S$ and $\Sigma_L$, respectively. 
The group $\Gamma_L$ in this construction plays the role of $\pi_1(M_L)$.

Notice that $\Sigma_L$ considered as a $\Gamma_S$ space, is a classifying bundle of $\Gamma_S$. 
Since $\Sigma_S < \Sigma_L$ is $\Gamma_S$-invariant
(here we use the assumption that $\Gamma_S < W_S^+$), 
it defines a class $[\Sigma_S/\Gamma_S] \in H_*(\Sigma_L/\Gamma_S;\B Z) = H_*(\Gamma_S;\B Z)$.

\begin{lemma}\label{manrep}
The class $[\Sigma_S/\Gamma_S] \in H_*(\Gamma_S;\B Z)$ can be represented as a pushforward of the fundamental class of a manifold.
\end{lemma}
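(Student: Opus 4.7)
The plan is to replace each Davis cell $C(S)$ inside $\Sigma_S$ by a chosen null-bordism $P$ of $S$, producing a manifold analogue $\Sigma'_S$ together with a $W_S$-equivariant collapsing map $c \colon \Sigma'_S \to \Sigma_S$ that carries the fundamental class of the resulting quotient manifold onto $[\Sigma_S/\Gamma_S]$. Using the null-bordism hypothesis, I fix a compact oriented $(s{+}1)$-manifold $P$ with $\partial P = S$ (where $s = \dim S$), triangulated so as to extend the given triangulation of $S$, so that $\partial P = S$ carries the mirror structure given by the faces $F_\sigma$. I then form $\Sigma'_S \defeq (W_S \times P)/{\sim}$ using the same equivalence relation that produces $\Sigma_S$ from $C(S)$: $(w,x)\sim(w',x)$ when $x \in F_\sigma$ and $w^{-1}w'$ lies in the finite parabolic $W_\sigma$ generated by the vertices of $\sigma$. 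The group $W_S$ acts with fundamental domain $P$, and since $C(S)$ is contractible the identity $\partial P \to S \subset C(S)$ extends to a map $P \to C(S)$; performed equivariantly, this yields the collapsing map $c$.

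The main technical step is to verify that $\Sigma'_S$ is a topological $(s{+}1)$-manifold. At an interior point of a copy of $P$ the local structure coincides with that of $P$, which is a manifold. At a point $x \in F_\sigma \subset \partial P$, the identifications are identical to those of the ordinary Davis construction; hence the link in $\Sigma'_S$ is built from $2^{\dim\sigma+1}$ copies of the link of $x$ in $P$, glued along the mirror hyperplanes labelled by the vertices of $\sigma$. Since $P$ is a manifold with boundary $S$ and $S$ is itself a manifold, the link of $x$ in $P$ is an $s$-disk whose boundary $(s{-}1)$-sphere equals the link of $x$ in $S$, and the mirror-labelled arrangement of these disks assembles into an $s$-sphere, exactly as in the classical proof that the Davis complex of a triangulated sphere is a manifold. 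The only place where $\Sigma_S$ itself fails to be a manifold is the cone apex (whose link equals $S$, not a sphere), and this is bypassed since the apex neighborhoods have been replaced by manifold-interior points of $P$.

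The stabilizers of the $W_S$-action on $\Sigma'_S$ coincide with those on $\Sigma_S$, so the torsion-free group $\Gamma_S$ acts freely; moreover each elementary reflection reverses the orientation of $P$, so $W_S^+$, and thus $\Gamma_S$, preserves orientation. Hence $N \defeq \Sigma'_S/\Gamma_S$ is a closed oriented $(s{+}1)$-manifold. Composing the descended map $\bar c \colon N \to \Sigma_S/\Gamma_S$ with the inclusion $\Sigma_S/\Gamma_S \hookrightarrow \Sigma_L/\Gamma_S$ gives the desired map $f$, and because $c$ is the identity on $\partial P = S$, its top-dimensional pushforward sends the fundamental chain of $N$ to the $\Gamma_S$-invariant cycle representing $[\Sigma_S/\Gamma_S]$, yielding $f_*[N] = [\Sigma_S/\Gamma_S]$. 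The hardest step is the manifold verification of $\Sigma'_S$, which reduces to the local link analysis at boundary points of $P$ and uses the manifold hypothesis on $S$.
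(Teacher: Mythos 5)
Your proposal is correct and follows essentially the paper's own argument: the paper likewise uses the null-bordism $B$ with $\partial B = S$ to resolve the only non-manifold points of $\Sigma_S/\Gamma_S$ (the cone apexes, whose links are $S$) and then collapses the glued-in copies of $B$ back to the apexes, so that the resulting closed manifold pushes its fundamental class onto $[\Sigma_S/\Gamma_S]$. The only difference is presentational: you replace the chamber $C(S)$ by the bordism $P$ equivariantly in the basic construction and then pass to the quotient by $\Gamma_S$, while the paper performs the same truncate-and-cap modification directly at the finitely many apexes of the compact quotient $\Sigma_S/\Gamma_S$.
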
  

\begin{proof}
First, we consider the Davis cell $C(S)$. By the assumption $S$ is null-bordant, thus there exists a manifold $B$ such that $S = \partial B$. 
The link of the apex of the cone $C(S)$ is $S$, thus we can truncate the apex and glue in the manifold $B$, getting rid of the singularity. 
Now we take care of $\Sigma_S$. 
The only points of $\Sigma_S$ which have noneuclidean neighborhoods are apexes of translates of the Davis cell.
The group $\Gamma_S$ acts on $\Sigma_S$ freely and cocompactly.  
Thus the quotient $\Sigma_S/\Gamma_S < \Sigma_L/\Gamma_S$ is compact and is a manifold except apexes of cones. 
We can do the above surgery for every apex of $\Sigma_S/\Gamma_S$. We obtain a manifold, denote it by $M_S$, 
together with a map $g \colon M_S \to \Sigma_L/\Gamma_S$, which collapses just glued copies of $B$ again to apexes of appropriate cones.
We have that $g_*([M_S]) = [\Sigma_S/\Gamma_S] \in H_*(\Gamma_S;\B Z)$.
\end{proof}

Define a chain $\alpha = \Sigma_{g \in W_L} (-1)^{l(g)}g.\Sigma_S$.
It is a $\Gamma_L$-equivariant chain because $\Gamma_L < W_L^+$. Thus it
defines a class $\beta = [\alpha / \Gamma_L] \in H_*(\Gamma_L;\B Z)$.
Since $\Gamma_L \cap W_S = \Gamma_S$, the class $\beta$ is a finite disjoint sum of $\pm \Sigma_S / \Gamma_S$ and it plays the role of $M^{01}_S$. 
The number of components in this sum equals $[W_L \colon \Gamma_L] / [W_S \colon \Gamma_S]$. 
Indeed, $\Sigma_S / \Gamma_S$ consists of $[W_S \colon \Gamma_S]$ cones $C(S)$ 
and $\alpha / \Gamma_L$ consists of $[W_L \colon \Gamma_L]$ cones (in Section \ref{construction} it was $2 = 2^{n+1}/2^n$).
By Lemma \ref{manrep}, $\beta$ is represented by a connected sum of manifolds $\pm M_S$. Denote this connected sum by $N$.
The rest of the construction is the surgery procedure described in Step 3 of \ref{construction}.
We call the resulting manifold $M(L,S)$. 

\begin{theorem}
The manifold $M(L,S)$ is rationally inessential and macroscopically large. 
\end{theorem}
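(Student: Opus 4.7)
The plan is to mirror the proof of Lemma \ref{S}, replacing the small-cover parity $|g|$ on $\B Z_2^{n+1}$ by the length parity $l(g)$ on $W_L$, which is a well-defined homomorphism $W_L \to \B Z_2$ for every right-angled Coxeter group. The hypothesis $\Gamma_L < W_L^+$ guarantees that any $\sum_g (-1)^{l(g)} g.(\cdot)$ construction yielding a $W_L$-equivariant-up-to-sign chain is in fact $\Gamma_L$-equivariant and descends to an honest singular chain on $\Sigma_L/\Gamma_L$. By the construction of $M(L,S)$ (Step 3 of Section \ref{construction} applied via Lemma \ref{manrep}), the classifying map $f_{M(L,S)} \colon M(L,S) \to B\Gamma_L = \Sigma_L/\Gamma_L$ satisfies $(f_{M(L,S)})_*[M(L,S)] = \beta$. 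It therefore suffices to show (1) $k\beta = 0 \in H_n(\Gamma_L;\B Z)$ and (2) $ec_n(\beta) \neq 0 \in H^{lf}_n(\Sigma_L;\B Z)$; the theorem then follows from Theorem \ref{dranish}.

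For (1), I would use $D$ to bound $k\beta$ explicitly. Let $C(D)$ denote the cone on $D$ inside the base Davis cell $\wt C \cong C(L)$, and form the locally finite chain
\[
\tau \;=\; \sum_{g \in W_L} (-1)^{l(g)}\, g.C(D)
\]
in $\Sigma_L$. Since $\Gamma_L < W_L^+$ the chain $\tau$ is $\Gamma_L$-equivariant; since $[W_L:\Gamma_L]<\infty$ the quotient $\tau/\Gamma_L$ is a finite singular chain in $\Sigma_L/\Gamma_L$. At the chain level $\partial C(D) = D - C(\partial D) = D - kC(S)$, so
\[
\partial \tau \;=\; \sum_{g \in W_L}(-1)^{l(g)} g.D \;-\; k\alpha.
\]
The first sum vanishes by the cancellation used in Lemma \ref{S}(1): each simplex $\sigma \subset D \subset L$, viewed in $\Sigma_L$, lies on the mirror structure of $\wt C$ and so has non-trivial finite stabilizer $W_{T(\sigma)}$, which is a finite right-angled Coxeter group and therefore isomorphic to some $(\B Z_2)^m$ with $m \geq 1$; consequently the coefficient of $\sigma$ in the sum is a scalar multiple of $\sum_{h \in W_{T(\sigma)}}(-1)^{l(h)} = (1-1)^m = 0$. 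Descending to the $\Gamma_L$-quotient gives $\partial(\tau/\Gamma_L) = -k\beta$ and hence $k\beta = 0$.

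For (2), $\beta$ is represented by the $\Gamma_L$-equivariant chain $\alpha$, so $ec_n(\beta) = [\alpha] \in H^{lf}_n(\Sigma_L;\B Z)$. Following Lemma \ref{S}(2) verbatim I would apply
\[
H^{lf}_n(\Sigma_L;\B Z) \xrightarrow{\rho_*} H_n(\Sigma_L,(\operatorname{int}\wt C)^c;\B Z) \xrightarrow{\mathrm{exc.}} H_n(\wt C,L;\B Z) = H_n(C(L),L;\B Z) \xrightarrow{\delta} H_{n-1}(L;\B Z).
\]
Among the summands $(-1)^{l(g)} g.\Sigma_S$ of $\alpha$, only $g=e$ contributes a chain intersecting $\operatorname{int}\wt C$: translates $w.C(S)$ for $w \in W_S \setminus \{e\}$ lie in neighboring Davis cells $w.\wt C \neq \wt C$, and for $g \notin W_S$ the decomposition $g.\Sigma_S = \bigcup_{w \in W_S} gw.C(S)$ places every cell of $g.\Sigma_S$ inside some $gw.\wt C \neq \wt C$ (as $gw \neq e$). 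Therefore $\rho_*([\alpha]) = [C(S)]$ in $H_n(C(L),L;\B Z)$, and $\delta[C(S)] = [S]$, which is nonzero by hypothesis; hence $ec_n(\beta) \neq 0$ and $M(L,S)$ is macroscopically large.

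The main obstacle is the cancellation $\sum_{g \in W_L}(-1)^{l(g)} g.D = 0$: it rests on length parity being a well-defined homomorphism $W_L \to \B Z_2$ (special to right-angled Coxeter groups) together with the fact that every simplex of $D \subset L$ has a non-trivial finite stabilizer under the $W_L$-action on $\Sigma_L$, both direct consequences of the mirror structure of the Davis cell. A secondary care-point is the identification $\rho_*([\alpha]) = [C(S)]$, which amounts to a careful bookkeeping of which translates of $\Sigma_S$ intersect the interior of the fundamental chamber. Everything else — the chain-level identity $\partial C(D) = D - kC(S)$, the excision isomorphism, and $\delta[C(S)]=[S]$ — is the same sequence of formal manipulations as in Lemma \ref{S}.
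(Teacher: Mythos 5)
Your proposal is correct and follows essentially the same route as the paper, which itself says that Lemma \ref{S} goes through with $q_*(M_D)$ replaced by the lift $\sum_{r\in R}(-1)^{l(r)}r.C(D)$ of $D$ to $\Sigma_L/\Gamma_L$; your version merely works upstairs in $\Sigma_L$ with the $\Gamma_L$-equivariant chain $\sum_{g\in W_L}(-1)^{l(g)}g.C(D)$ and then descends, which is the same computation phrased equivariantly. The stabilizer-parity cancellation you prove directly (using that mirror simplices have stabilizer $(\B Z_2)^m$, $m\geq 1$, on which the length character is balanced) is exactly the content of the generalisation of Lemma \ref{lift} that the paper invokes without proof, and your bookkeeping for $\rho_*([\alpha])=[C(S)]$ matches Lemma \ref{S}(2).
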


\begin{proof}
The proof of Lemma \ref{S} goes through essentially without changes.  
Namely, the assumption that $kS$ is the boundary of $D$ allows to carry out the computations 
as in Lemma \ref{S}(1). The only difference is that we replace $q_*(M_D)$ with the lift of $D$ to $\Sigma_L/\Gamma_L$. 
That is, with $\sum_{r \in R}(-1)^{l(r)}r.C(D)$, where $R$ is a set of representatives of the cosets of $\Gamma_L$ in $W_L$. 
As well, Lemma \ref{lift} admits a straightforward generalisation to the situation
when we lift a chain to a cover of $C(L)$ of the form $\Sigma_L/\Gamma_L$. 
In Lemma \ref{S}(2) we really work with the universal cover of $M_L$, which is the Davis complex for $L$. 
\end{proof}

\begin{remark}
The advantage of using small covers to construct our examples, except their intrinsic beauty,
gives us a better insight into the (co)homology ring of $M^n_k$ and possible spin structures.
They are the simplest possible examples to deal with.  
\end{remark}

This paper leaves the following question open.

\begin{question*}
Does $M(L,S)$ admit a PSC metric? In particular, does $M_k^n$ with no additional assumptions on Step 3, see \ref{spindisc}, admit a PSC metric?
\end{question*}

\subsection*{Acknowledgements}

I wish to thank Tadeusz Januszkiewicz for his constant help and guidance throughout this project
and Alexander N. Dranishnikov for helpful discussions and comments.
I am also indebted to \'{S}wiatos\l{}aw Gal and \L ukasz Garncarek for their help in improving the presentation of the paper, 
and the anonymous referee for pointing out simpler proof of Lemma \ref{smallisspin}. 

\bibliography{../../../bib/bibliography}
\bibliographystyle{acm}
 
\end{document}